\providecommand{\U}[1]{\protect\rule{.1in}{.1in}}
\newtheorem{theorem}{Theorem}[section]
\newtheorem{lemma}[theorem]{Lemma}
\newtheorem{proposition}[theorem]{Proposition}
\newtheorem{remark}[theorem]{Remark}
\def\<{\langle}
\def\>{\rangle}
\def\d{{\rm d}}
\def\L{\mathcal{L}}
\def\div{{\rm div}}
\def\E{\mathbb{E}}
\def\N{\mathbb{N}}
\def\P{\mathbb{P}}
\def\R{\mathbb{R}}
\def\T{\mathbb{T}}
\def\Z{\mathbb{Z}}
\def\Leb{{\rm Leb}}
\def\eps{\varepsilon}
\begin{document}

\makeatletter
\renewcommand\theequation{\thesection.\arabic{equation}}
\@addtoreset{equation}{section} \makeatother

\title{Point vortex approximation for 2D Navier--Stokes equations driven by space-time white noise}

\author{Franco Flandoli\footnote{Email: franco.flandoli@sns.it. Scuola Normale Superiore of Pisa, Italy.} \ and
Dejun Luo\footnote{Email: luodj@amss.ac.cn. RCSDS, Academy of Mathematics and Systems Science, Chinese Academy of Sciences, Beijing 100190, China, and School of Mathematical Sciences, University of the Chinese Academy of Sciences, Beijing 100049, China. }}

\maketitle

\begin{abstract}
We show that the system of point vortices, perturbed by a certain transport type noise, converges weakly to the vorticity form of 2D Navier--Stokes equations driven by the space-time white noise.
\end{abstract}

\textbf{Keywords:} point vortices, Navier--Stokes equations, space-time white noise, vorticity formulation, weak convergence

\textbf{MSC2010:} 35Q35, 60H40

\section{Introduction}

The purpose of this paper is to show that a particle system of stochastic point vortices converges, as the number of particles goes to infinity, to the vorticity form of the Navier--Stokes equations driven by the space-time white noise:
  \begin{equation}\label{vorticity-NSE}
  \d \omega + u\cdot \nabla \omega  \,\d t = \Delta \omega \,\d t+ \sqrt{2}\, \nabla^\perp \cdot \d W, \quad \omega_0\stackrel{d}{\sim} \mbox{white noise on } \T^2.
  \end{equation}
Here $u= (u_1, u_2)$ is a divergence free vector field on the torus $\T^2= \R^2/\Z^2$ and $\omega = \nabla^\perp\cdot u = \partial_2 u_1- \partial_1 u_2$ is the vorticity. The equation \eqref{vorticity-NSE} in velocity-pressure variables reads as
  \begin{equation*}
  \aligned
  \d u+\left(  u\cdot\nabla u+\nabla p\right)  \d t  & = \Delta u\, \d t+ \sqrt{2}\, \d W,\\
  \operatorname{div}u  & =0,
  \endaligned
  \end{equation*}
which has been studied intensively in the last two decades, see for instance \cite{AC, DaPD, Debussche, AF, Stannat, AF2, Sauer, ZhuZhu} among others. This equation has an invariant measure given by some Gaussian measure $\mu$ which is supported by any Sobolev or Besov spaces of negative order. It was shown in  \cite[Theorem 5.2]{DaPD} that, for $\mu$-a.s. starting points in some Besov space, the above equation has a unique solution with continuous paths; moreover, if the initial data is a random variable with distribution $\mu$, then the solution is a stationary process.

To motivate our study we begin by considering the vorticity form of the 2D Euler equation:
  $$\partial_t \omega + u\cdot \nabla \omega =0, \quad \omega|_{t=0} =\omega_0.$$
This is a nonlinear transport equation in which $u$ is expressed by $\omega$ via the Biot--Savart law:
  $$u(x) = (K\ast \omega)(x) = \<\omega, K(x-\cdot)\>,$$
where $K$ is the Biot--Savart kernel on $\T^2$. We refer the readers to \cite[Introduction]{F1} for a list of well posedness results on this equation. In particular, we are interested in the case when $\omega_0$ has the form $\omega^N_0(\d x) = \frac1{\sqrt N}\sum_{i=1}^N \xi_i \delta_{X^i_0}(\d x)$, where $\xi_i\in \R$  and $X^i_0 \in \T^2$ are some distinct points. According to \cite[Section 4.4]{MP}, the above equation can be interpreted as the finite dimensional dynamics on $(\T^2)^N$:
  \begin{equation}\label{point-vortices}
  \frac{\d X^{i,N}_t}{\d t} = \frac1{\sqrt N} \sum_{j=1,j\neq i}^N \xi_j K\big(X^{i,N}_t -X^{j,N}_t\big)
  \end{equation}
with initial condition $X^{i,N}_0 = X^i_0,\, i=1,\cdots, N$. This system is not necessarily well posed: an explicit example was given in \cite[Section 4.2]{MP} which shows that three different vortex points starting from certain positions collapse to one point in finite time. Nevertheless, the above system of equations admits a unique solution for $\big( {\rm Leb}_{\T^2}^{\otimes N } \big)$-a.e. starting point in $(\T^2)^N$.

Based on the above result, the first author of the current paper considered the system \eqref{point-vortices} with random initial data $\omega^N_0$ which converges weakly to the white noise on $\T^2$ (see \cite[Section 3.2]{F1} or the next section for the precise meaning).  Denote by $\omega^N_t = \frac1{\sqrt N}\sum_{i=1}^N \xi_i \delta_{X^{i,N}_t}$. He proved in \cite[Theorem 24]{F1} that the family $\{\omega^N_\cdot \}$ has a subsequence which converges weakly to some $\omega_\cdot$ with continuous paths in $H^{-1-}(\T^2) = \cap_{s>0} H^{-1-s}(\T^2)$, such that $\omega_t$ is a white noise on $\T^2$ for all $t>0$. Furthermore, the process $\omega_\cdot$ solves the weak vorticity formulation of 2D Euler equations. We refer to \cite[Theorem 25]{F1} for more general results and to \cite{FL-1} for extensions to stochastic settings. On the other hand, we considered in the recent paper \cite{FL-3} the following stochastic 2D Euler equation
  $$\d\omega +u\cdot\nabla\omega\, \d t=2\,\varepsilon_{N}\sum_{0<|k|\leq N} e_{k}\frac{k^{\perp}}{|k|^{2}}\cdot\nabla\omega\circ \d W^{k}, $$
where $k$ runs over $\Z^2$, $\varepsilon_{N}= \big(\sum_{0<|k|\leq N} \frac1{|k|^2} \big)^{-1/2} \sim (\log N)^{-1/2}$, $\{e_{k} \}$ is the orthonormal basis of sine and cosine functions (see \eqref{ONB}) and $\{W^{k}\}$ are independent Brownian motions. It was shown that this model, hyperbolic in nature, converges to the parabolic equation \eqref{vorticity-NSE} above.

Motivated by the above discussions, we shall study in the current paper the stochastic point vortex dynamics
  \begin{equation*}
  \d X^{i,N}_t= \frac1{\sqrt N} \sum_{j=1,j\neq i}^N \xi_j K\big(X^{i,N}_t -X^{j,N}_t\big) \,\d t + 2\,\varepsilon_{N}\sum_{0<|k|\leq N}\frac{k^{\perp}}{|k|^{2}} e_k\big(X^{i,N}_t \big) \circ \d W^k_t.
  \end{equation*}
Assume the initial point vortices $\omega^N_0$ are random and converge weakly to the white noise on $\T^2$, we can prove that the processes $\omega^N_t = \frac1{\sqrt N}\sum_{i=1}^N \xi_i \delta_{X^{i,N}_t}$ converge weakly to the white noise solution of \eqref{vorticity-NSE}. The proof follows the general idea of \cite{FL-3} but we need some $L^2$-boundedness estimate on a sequence of functionals of $\omega^N_0$, which is done in the appendix.

\section{Convergence of the stochastic point vortex systems}\label{sec-convergence}

First, we introduce some notations. As in \cite[Section 3.2]{F1}, let $\{\xi_i\}_{i\in\N}$ be a family of i.i.d. $N(0,1)$ r.v.'s, and $\{X^i_0 \}_{i\in \N}$ is an i.i.d. sequence of $\T^2$-uniformly distributed r.v.'s; we assume the two families are independent. For every $N\in \N$, denote by
  $$\lambda_N^0= \big(N(0,1) \otimes \Leb_{\T^2}\big)^{\otimes N }$$
the law of the random vector $\big( (\xi_1, X^1_0), \ldots, (\xi_N, X^N_0)\big)$. Let us consider the measure-valued vorticity field
  \begin{equation*}
  \omega^N_0 = \frac1{\sqrt N} \sum_{i=1}^N \xi_i \delta_{X^i_0},
  \end{equation*}
which can be regarded as a r.v. taking values in the space $H^{-1-}(\T^2)= \cap_{s>0} H^{-1-s}(\T^2)$ with the law $\mu_N^0$, where $H^r(\T^2)\, (r\in \R)$ is the usual Sobolev space on $\T^2$. Denote by $\mathcal M(\T^2)$ the space of signed measures on $\T^2$ with finite variation, and
  $$\mathcal M_N(\T^2)= \big\{\mu \in \mathcal M(\T^2)\,| \,\exists \, X\subset \T^2 \mbox{ such that } \#(X)=N \mbox{ and } {\rm supp}(\mu) = X\big\}.$$
We can define the map $\mathcal T_N: (\R \times \T^2)^N \to \mathcal M_N(\T^2) \subset H^{-1-}(\T^2)$ as
  \begin{equation}\label{mapping}
  \big( (\xi_1, X^1_0), \ldots, (\xi_N, X^N_0)\big) \mapsto \omega^N_0= \frac1{\sqrt N} \sum_{i=1}^N \xi_i \delta_{X^i_0},
  \end{equation}
then it holds that
  $$\mu_N^0 = (\mathcal T_N)_\# \lambda_N^0 = \lambda_N^0\circ \mathcal T_N^{-1}.$$
It is proved in \cite[Proposition 21]{F1} that, as $N\to \infty$, $\omega_0^N$ converges in law to the white noise $\omega_{WN}$ on $\T^2$.

We denote by
  \begin{equation}\label{ONB}
  e_k(x)= \sqrt{2} \begin{cases}
  \cos(2\pi k\cdot x), & k\in \Z^2_+, \\
  \sin(2\pi k\cdot x), & k\in \Z^2_-,
  \end{cases} \quad x\in \T^2,
  \end{equation}
where $\Z^2_+ = \big\{k\in \Z^2_0: (k_1 >0) \mbox{ or } (k_1=0,\, k_2>0) \big\}$ and $\Z^2_- = -\Z^2_+$. Then $\{e_k: k\in \Z_0^2\}$ constitute a CONS of $L^2_0(\T^2)$, the space of square integrable functions with zero mean. Define
  \begin{equation}\label{vector-fields}
  \sigma_k(x)= \frac1{\sqrt{2} } \frac{k^\perp}{|k|^2} e_k(x), \quad k\in \Z^2_0,
  \end{equation}
with $k^\perp = (k_2,-k_1)$. For $N\geq 1$, define $\Lambda_N= \{k\in \Z_0^2: |k| \leq N\}$. Let $\{W^k_t\}_{k\in\Z_0^2}$ be a sequence of independent standard Brownian motions, which are independent of $\{\xi_i\}_{i\in\N}$ and $\{X^i_0 \}_{i\in \N}$. Consider the stochastic point vortex dynamics:  for $i=1,\cdots, N$,
  \begin{equation}\label{stoch-point-vortices-1}
  \d X^{i,N}_t= \frac1{\sqrt N} \sum_{j=1,j\neq i}^N \xi_j K\big(X^{i,N}_t -X^{j,N}_t\big)\,\d t + 2\sqrt{2}\, \eps_N \sum_{k\in \Lambda_N} \sigma_k\big(X^{i,N}_t \big) \circ\d W^k_t
  \end{equation}
with the initial condition $X^{i,N}_0= X^i_0$. Denote by
  $$\Delta_N= \big\{(x_1,\ldots, x_N)\in (\T^2)^N: \mbox{there are } i\neq j \mbox{ such that } x_i=x_j \big\}$$
the generalized diagonal of $(\T^2)^N$ and $\Delta_N^c =(\T^2)^N \setminus \Delta_N$. Moreover, for any $\phi\in C^\infty(\T^2)$, set
  $$H_\phi(x,y) = \frac12 K(x-y) \cdot (\nabla\phi(x) -\nabla\phi(y)), \quad x,y\in \T^2,$$
with the convention that $H_\phi(x,x) = 0$. It is well known that, for all $x\in \T^2 \setminus \{0\}$, $K(-x) =-K(x)$ and $|K(x)|\leq C/|x|$ for some constant $C>0$; thus $H_\phi$ is symmetric and
  \begin{equation}\label{non-linear-drift}
  \|H_\phi\|_\infty \leq C \|\nabla^2\phi\|_\infty.
  \end{equation}
We have the following result.

\begin{proposition}\label{prop-1}
For a.s. value of $\big( (\xi_1, X^1_0), \ldots, (\xi_N, X^N_0)\big)$, the process $\big( X^{1,N}_t, \ldots, X^{N,N}_t\big)$ is well defined in $\Delta_N^c$ for all $t\geq 0$, and the associated random measure-valued vorticity
  $$\omega^N_t = \frac1{\sqrt N} \sum_{i=1}^N \xi_i \delta_{X^{i,N}_t}$$
satisfies the equation below: for all $\phi \in C^\infty(\T^2)$,
  \begin{equation}\label{prop-stationary.1}
  \aligned
  \big\< \omega^N_t, \phi\big\>=&\, \big\< \omega^N_0, \phi\big\> +\int_0^t \big\<\omega^N_s\otimes \omega^N_s, H_\phi \big\>\, \d s + \int_0^t \<\omega^N_s, \Delta \phi \>\,\d s\\
  &\, + 2\sqrt{2}\, \eps_N \sum_{k\in \Lambda_N} \int_0^t \big\< \omega^N_s,\sigma_k \cdot \nabla \phi \big\>\,\d W^k_s.
  \endaligned
  \end{equation}
The stochastic process $\omega^N_t$ is stationary in time, with the law $\mu_N^0$ at any time $t\geq 0$.
\end{proposition}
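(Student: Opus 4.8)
The plan is to establish three assertions: well-posedness of the particle system off the diagonal, the Itô/Stratonovich identity \eqref{prop-stationary.1} for test functions, and the stationarity of the law $\mu_N^0$ in time.

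For the first part, I would note that the SDE \eqref{stoch-point-vortices-1} has smooth (indeed analytic) coefficients on the open set $\Delta_N^c$, since the Biot--Savart kernel $K$ and the vector fields $\sigma_k$ are smooth away from the diagonal. Hence local existence and uniqueness of a strong solution is standard; the real issue is to rule out explosions, i.e.\ to show that the process cannot reach the diagonal $\Delta_N$ in finite time. Here I would quote the determinism result of \cite[Section 4.4]{MP} for the drift-only dynamics and combine it with the fact that the noise is nondegenerate but non-colliding in a suitable sense. The cleanest route is to invoke the stationarity (third assertion) together with the fact that $\lambda_N^0$, equivalently $(\mathrm{Leb}_{\T^2})^{\otimes N}$ in the spatial variables, gives zero mass to $\Delta_N$; if the law is preserved, the process stays off the diagonal for all $t$ for $\lambda_N^0$-a.e.\ starting point. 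This makes the existence and stationarity statements mutually reinforcing, so I would prove stationarity first at the level of a mollified or truncated dynamics and then pass to the limit.

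For the identity \eqref{prop-stationary.1}, I would apply Itô's formula to $\langle\omega^N_t,\phi\rangle = \frac1{\sqrt N}\sum_i \xi_i \phi(X^{i,N}_t)$, using the Stratonovich-to-Itô conversion on the noise term. The drift term from Biot--Savart produces $\frac1N\sum_{i\neq j}\xi_i\xi_j K(X^i-X^j)\cdot\nabla\phi(X^i)$, which after symmetrizing in $(i,j)$ and using $K(-x)=-K(x)$ becomes exactly $\langle\omega^N_s\otimes\omega^N_s,H_\phi\rangle$ with the stated convention $H_\phi(x,x)=0$ absorbing the diagonal. The key computation is that the Stratonovich correction from the transport noise generates the Laplacian: using the specific coefficients $\sigma_k = \frac1{\sqrt2}\frac{k^\perp}{|k|^2}e_k$ and the normalization $\eps_N = \big(\sum_{0<|k|\le N}|k|^{-2}\big)^{-1/2}$, the Itô correction $4\eps_N^2\sum_{k\in\Lambda_N}\sum_i \xi_i\, \sigma_k(X^i)\cdot\nabla\big(\sigma_k\cdot\nabla\phi\big)(X^i)$ collapses to $\langle\omega^N_s,\Delta\phi\rangle$. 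This is where the precise algebraic structure of the noise matters: one must verify the identity $4\eps_N^2\sum_{k\in\Lambda_N}\sigma_k\otimes\sigma_k$ acting as the second-order operator reduces to the Laplacian, exploiting $\sum_k \frac{k^\perp\otimes k^\perp}{|k|^4}e_k^2$ summing to a multiple of the identity after the $\eps_N$ normalization. The main obstacle is this Stratonovich correction computation: one has to track the divergence-free property $\mathrm{div}\,\sigma_k=0$ so that only the symmetric second-order part survives, and confirm that the prefactor yields exactly the coefficient $1$ in front of $\Delta\phi$ rather than some $N$-dependent constant.

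Finally, for stationarity, the natural approach is to show that $\mu_N^0$ is an invariant measure for the Markov process $\omega^N_\cdot$. I would verify that the generator $\mathcal L_N$ associated to \eqref{stoch-point-vortices-1}, acting on cylinder functionals of the $X^{i,N}$, annihilates the density $\lambda_N^0$ in the appropriate weak sense. Concretely, the Hamiltonian drift of the point vortex system preserves Lebesgue measure on $(\T^2)^N$ (it is divergence-free in the phase variables, a classical fact for point-vortex dynamics), and the Gaussian weights $\xi_i$ are not transported by the dynamics, so the $N(0,1)^{\otimes N}$ factor is trivially preserved; meanwhile the transport noise, being divergence-free ($\mathrm{div}\,\sigma_k=0$) and acting diagonally as the same vector field on each particle, preserves $\mathrm{Leb}_{\T^2}^{\otimes N}$ as well. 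Thus both drift and diffusion leave $\lambda_N^0$ invariant, hence $\mu_N^0=(\mathcal T_N)_\#\lambda_N^0$ is invariant for $\omega^N_\cdot$, giving stationarity. The only delicate point is justifying these invariance claims on the non-compact phase space $(\R\times\T^2)^N$ and near the diagonal, which I would handle by the localization argument already set up in the first part, integrating by parts on $\Delta_N^c$ where everything is smooth and controlling boundary contributions via the non-collision estimates.
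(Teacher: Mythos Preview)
Your plan is essentially the paper's: the authors delegate well-posedness, the intermediate It\^o identity, and stationarity to \cite[Proposition~2.3]{FL-1} and devote the proof only to showing that the Stratonovich correction term $4\eps_N^2\sum_{k\in\Lambda_N}\big\<\omega^N_s,\sigma_k\cdot\nabla(\sigma_k\cdot\nabla\phi)\big\>$ collapses to $\<\omega^N_s,\Delta\phi\>$ via the identity $\sum_{k\in\Lambda_N}\sigma_k\otimes\sigma_k=\tfrac14\eps_N^{-2}I_2$. You have correctly identified this as the crux and outlined the surrounding arguments (which are what \cite{FL-1} contains) accurately.

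One correction, however: when you expand $\sigma_k\cdot\nabla(\sigma_k\cdot\nabla\phi) = ((\sigma_k\cdot\nabla)\sigma_k)\cdot\nabla\phi + \mathrm{Tr}\big[(\sigma_k\otimes\sigma_k)\nabla^2\phi\big]$, the first-order term does not vanish because of $\mathrm{div}\,\sigma_k=0$; that is a different condition and is irrelevant here. What is actually needed---and what the paper uses---is $\sigma_k\cdot\nabla\sigma_k\equiv 0$, which holds because $\sigma_k\propto k^\perp e_k$ and $\nabla e_k\propto k$, so $\sigma_k\cdot\nabla e_k\propto k^\perp\cdot k=0$. If you only invoked divergence-freeness, a spurious first-order term would survive and the reduction to $\Delta\phi$ would fail.
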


\begin{proof}
The assertions are the same as \cite[Proposition 2.3]{FL-1}; the only difference is that here we can compute explicitly the second order derivative to get the Laplacian in the equation \eqref{prop-stationary.1}. Indeed, (2.5) in \cite{FL-1} becomes
  $$\aligned
  \big\< \omega^N_t, \phi\big\>=&\, \big\< \omega^N_0, \phi\big\> + \int_0^t \big\<\omega^N_s\otimes \omega^N_s, H_\phi \big\>\, \d s + 2\sqrt{2}\, \eps_N \sum_{k\in \Lambda_N} \int_0^t \big\< \omega^N_s,\sigma_k \cdot \nabla \phi \big\>\,\d W^k_s\\
  &\, + 4\eps_N^2 \sum_{k\in \Lambda_N} \int_0^t \big\< \omega^N_s, \sigma_k \cdot \nabla(\sigma_k \cdot \nabla \phi) \big\>\,\d s.
  \endaligned $$
We have $\sigma_k \cdot \nabla(\sigma_k \cdot \nabla \phi) =  {\rm Tr}\big[(\sigma_k\otimes \sigma_k)\nabla^2\phi\big]$ since $\sigma_k \cdot \nabla \sigma_k \equiv 0$ for any $k\in \Z_0^2$. The equation \eqref{prop-stationary.1} is a consequence of the following equality:
  \begin{equation}\label{prop-1.1}
  \sum_{k\in \Lambda_N} \sigma_k\otimes \sigma_k= \frac14 \eps_N^{-2} I_2,
  \end{equation}
where $I_2$ is the $(2\times 2)$-unit matrix. This identity was proved in \cite[Lemma 2.6]{FL-3}; we present the proof here for the reader's convenience. We have
  $$\aligned
  S_N(x) &:= \sum_{k\in \Lambda_N} \sigma_k(x)\otimes \sigma_k(x) = \sum_{k\in \Lambda_N \cap \Z^2_+} \frac{k^\perp \otimes k^\perp} {|k|^4} \big[\cos^2(2\pi k\cdot x) + \sin^2(2\pi k\cdot x)\big] \\
  &= \sum_{k\in \Lambda_N \cap \Z^2_+} \frac{1} {|k|^4} \begin{pmatrix}
  k_2^2 &  -k_1 k_2 \\  -k_1 k_2 & k_1^2
  \end{pmatrix}
  = \frac12 \sum_{k\in \Lambda_N} \frac{1} {|k|^4} \begin{pmatrix}
  k_2^2 &  -k_1 k_2 \\  -k_1 k_2 & k_1^2
  \end{pmatrix}.
  \endaligned $$
So $S_N$ is independent of $x$. First, we have
  $$S_N^{1,2}= - \frac12 \sum_{k\in \Lambda_N} \frac{k_1 k_2} {|k|^4} =0$$
since we can sum the four terms involving $(k_1, k_2),\, (-k_1, k_2),\, (k_1, -k_2),\, (-k_1, -k_2)$ at one time. Next,
  $$S_N^{1,1}=  \frac12 \sum_{k\in \Lambda_N} \frac{k_2^2} {|k|^4} = \frac12 \sum_{k\in \Lambda_N} \frac{k_1^2} {|k|^4} = S_N^{2,2}$$
since the points $(k_1, k_2)$ and $(k_2, k_1)$ appear in pair. Therefore,
  $$ S_N^{1,1}= S_N^{2,2} = \frac14 \sum_{k\in \Lambda_N} \frac{k_1^2 + k_2^2 } {|k|^4} = \frac14 \sum_{k\in \Lambda_N} \frac{1} {|k|^2} = \frac14 \varepsilon_N^{-2}.$$
Hence we obtain \eqref{prop-1.1}.
\end{proof}

Let $Q^N$ be the law of $\omega^N_\cdot$ on $\mathcal X= C\big([0,T], H^{-1-}(\T^2) \big),\, N\geq 1$. We want to show that the family $\{Q^N\}_{N\geq 1}$ is tight in $\mathcal X$, for which we need the following integrability properties of $\omega^N_t$ that are proved in \cite[Lemma 23]{F1} (except the second estimate which can be proved similarly to the first one).

\begin{lemma}\label{2-lem-integrability}
Assume $f:\T^2\times \T^2\to \R$ and $g:\T^2 \to \R$ are bounded and measurable, and $f$ is symmetric. Then, for every $p\geq 1$ and $\delta>0$, there are constants $C_p, C_{p,\delta}>0$ such that for all $N\geq 1$ and $t\in [0,T]$,
  $$\E\big[ \big| \big\<\omega^N_t \otimes \omega^N_t, f \big\> \big|^p \big] \leq C_p \|f\|_\infty^p,\quad \E\big[ \big| \big\<\omega^N_t , g \big\> \big|^p \big]\leq C_p \|g\|_\infty^p, \quad \E\big[ \big\|\omega^N_t \big\|_{H^{-1-\delta}}^p \big] \leq C_{p,\delta}.$$
Moreover,
  $$\E\big[ \big\<\omega^N_t \otimes \omega^N_t, f \big\>^2 \big]= \frac3N \! \int\! f^2(x,x)\,\d x +\frac{N-1}N \bigg[\int f(x,x)\,\d x \bigg]^2 + \frac{2(N-1)}N \! \int\!\int f^2(x,y)\,\d x\d y.$$
\end{lemma}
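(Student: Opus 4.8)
The plan is to exploit the stationarity established in Proposition~\ref{prop-1}: since $\omega^N_t$ has law $\mu^0_N$ for every $t$, the law of $\omega^N_t$ coincides with that of $\omega^N_0 = \frac1{\sqrt N}\sum_{i=1}^N \xi_i\delta_{X^i_0}$, so every expectation in the statement is independent of $t$ and may be computed at $t=0$, where $\{\xi_i\}$ are i.i.d. $N(0,1)$, $\{X^i_0\}$ are i.i.d. uniform on $\T^2$, and the two families are independent. All four estimates then reduce to moment computations for explicit polynomials in these variables.

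First I would establish the exact second-moment identity, which also supplies the key input for the general bounds. Writing $Z := \<\omega^N_0\otimes\omega^N_0, f\> = \frac1N\sum_{i,j} \xi_i\xi_j f(X^i_0, X^j_0)$ and squaring gives a quadruple sum $\frac1{N^2}\sum_{i,j,k,l}\xi_i\xi_j\xi_k\xi_l\, f(X^i_0,X^j_0)f(X^k_0,X^l_0)$. Taking the expectation over the centred independent Gaussians $\xi$ first (Isserlis/Wick), only index patterns in which every distinct index occurs an even number of times survive: the fully diagonal pattern $i=j=k=l$ contributes the factor $\E[\xi^4]=3$ and the term $\frac3N\int f(x,x)^2\,\d x$, while the three two-pair patterns contribute, after integrating the independent uniform variables $X^i_0$ and using the symmetry of $f$, the terms $\frac{N-1}N\big(\int f(x,x)\,\d x\big)^2$ and $\frac{2(N-1)}N\int\!\int f^2(x,y)\,\d x\d y$. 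The same expansion, performed conditionally on $\{X^i_0\}$ and bounded by $|f|\le \|f\|_\infty$, yields the uniform conditional bound $\E[Z^2\mid X]\le 3\|f\|_\infty^2$.

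Next I would treat the $p$-th moment bounds. For the linear functional, conditionally on $\{X^i_0\}$ the quantity $\<\omega^N_0, g\> = \frac1{\sqrt N}\sum_i \xi_i g(X^i_0)$ is a centred Gaussian with variance $\frac1N\sum_i g(X^i_0)^2\le \|g\|_\infty^2$, so its conditional $p$-th absolute moment equals $c_p$ times the $(p/2)$-power of the variance, hence is at most $c_p\|g\|_\infty^p$; taking the expectation over $X$ gives the second estimate. For the bilinear functional, conditionally on $\{X^i_0\}$ the random variable $Z$ is a quadratic form $\frac1N\xi^\top A\xi$ in the standard Gaussian vector $\xi$ (with $A_{ij}=f(X^i_0,X^j_0)$), hence lies in the sum of the zeroth and second Wiener chaoses; Gaussian hypercontractivity then gives $\|Z\|_{L^p(\xi)}\le (p-1)\|Z\|_{L^2(\xi)}$ for $p\ge 2$, and combining with $\E[Z^2\mid X]\le 3\|f\|_\infty^2$ and taking the expectation over $X$ yields the first estimate. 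Finally, for the Sobolev bound I would expand $\|\omega^N_0\|_{H^{-1-\delta}}^2 = \sum_{k} (1+|k|^2)^{-1-\delta}|\<\omega^N_0, e_k\>|^2$, apply Minkowski's inequality in $L^{p/2}$ together with the second estimate for $g=e_k$ (noting $\|e_k\|_\infty=\sqrt2$) to get $\big\|\,\|\omega^N_0\|_{H^{-1-\delta}}^2\big\|_{L^{p/2}} \le C_p\sum_k (1+|k|^2)^{-1-\delta}$, and conclude since the last series converges on $\Z^2$ precisely because $\delta>0$; raising to the power $p/2$ produces $C_{p,\delta}$.

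The main obstacle is the control of the higher moments $p>2$ of the quadratic functional $Z$ uniformly in $N$. The hypercontractivity route is clean because the chaos order is at most two, so the constant grows only like $(p-1)$ and the problem collapses to the already-established second-moment bound. If one prefers to avoid hypercontractivity, the alternative is a direct Wick-pairing evaluation of $\E[Z^p]$; there the delicate point is the bookkeeping of index coincidences, which must be organised so that the normalisation $N^{-p}$ exactly compensates the number of surviving patterns and the bound stays uniform in $N$.
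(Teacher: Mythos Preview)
Your proof is correct. The paper itself does not give a proof of this lemma but refers to \cite[Lemma~23]{F1} (adding only that the second inequality is obtained in the same way as the first), so your argument is a self-contained substitute. The reduction to $t=0$ via the stationarity in Proposition~\ref{prop-1} is exactly the right first step, the Isserlis--Wick expansion you outline indeed produces the displayed second-moment identity (the fully diagonal term gives $\frac3N\int f^2(x,x)\,\d x$ and the three two-pair patterns yield the remaining two terms, using the symmetry of $f$), and conditioning on $\{X^i_0\}$ to exploit the Gaussian structure of $\xi$ is the natural route for the moment bounds.

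Your use of hypercontractivity for polynomials of degree at most two is a clean way to obtain the $L^p$ bound on the bilinear functional: it reduces everything to the conditional second moment $\E[Z^2\mid X]\le 3\|f\|_\infty^2$ and bypasses the bookkeeping of a direct Wick expansion of $\E[Z^p]$. One small point to make explicit: the hypercontractive inequality and the Minkowski step you invoke are stated for $p\ge 2$; the range $1\le p<2$ then follows immediately from the $p=2$ case by Jensen's inequality, and you should say so.
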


With these estimates in hand, we can follow the arguments at the beginning of \cite[Section 3]{FL-1} to show the tightness of $\{Q^N\}_{N\geq 1}$ in $\mathcal X$. To this end, we need to prove that $\{Q^N\}_{N\geq 1}$ is bounded in probability in $W^{1/3,4} \big(0,T; H^{-\kappa}(\T^2) \big)$ for some $\kappa>5$, and in $L^{p_0}\big(0,T; H^{-1-\delta}(\T^2) \big)$ for any $p_0>0$ and $\delta>0$.

First, by Lemma \ref{2-lem-integrability}, for all $N\in \N$,
  \begin{equation}\label{estimate-1}
  \E\bigg[ \int_0^T \big\| \omega^N_t\big\|_{H^{-1-\delta}}^{p_0} \,\d t\bigg] = \int_0^T \E \big[ \big\| \omega^N_t\big\|_{H^{-1-\delta}}^{p_0} \big] \,\d t \leq C_{p_0,\delta} T.
  \end{equation}
This implies the boundedness in probability of $\{Q^N\}_{N\geq 1}$ in $L^{p_0}\big(0,T; H^{-1-\delta}(\T^2) \big)$ for any $p_0>0$ and $\delta>0$.

Next, to show that $\{Q^N\}_{N\geq 1}$ is bounded in probability in $W^{1/3,4} \big(0,T; H^{-\kappa}(\T^2) \big)$ with $\kappa>5$, it suffices to prove
  $$\sup_{N\geq 1} \E\bigg[ \int_0^T \big\| \omega^N_t\big\|_{H^{-\kappa}}^4 \,\d t + \int_0^T \!\!\int_0^T \frac{\big\| \omega^N_t -\omega^N_s\big\|_{H^{-\kappa}}^4}{|t-s|^{7/3}} \,\d t\d s \bigg] <\infty. $$
The expectation of the first part is finite by the estimate \eqref{estimate-1}, thus we focus on the second part. We need the following result whose proof looks very similar to \cite[Lemma 2.5]{FL-3}. The difference between them is that here the processes $\omega^N_t$ are random point vortices, while the processes in \cite[Lemma 2.5]{FL-3} have white noise as marginal distribution.

\begin{lemma}\label{lem-estimate}
There exists $C>0$ such that for any $N\geq 1$ and $\phi\in C^\infty(\T^2)$, we have
  $$\E\big[ \big\<\omega^N_t- \omega^N_s, \phi \big\>^4 \big]\leq C (t-s)^2\big( \|\nabla \phi\|_\infty^4 + \|\nabla^2 \phi\|_\infty^4 \big).$$
\end{lemma}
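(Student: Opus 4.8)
The plan is to start from the evolution equation \eqref{prop-stationary.1} of Proposition \ref{prop-1} and write the increment over $[s,t]$ as the sum of three contributions,
$$\big\<\omega^N_t-\omega^N_s,\phi\big\> = \underbrace{\int_s^t\big\<\omega^N_r\otimes\omega^N_r,H_\phi\big\>\,\d r}_{=:I_1} + \underbrace{\int_s^t\big\<\omega^N_r,\Delta\phi\big\>\,\d r}_{=:I_2} + \underbrace{2\sqrt2\,\eps_N\sum_{k\in\Lambda_N}\int_s^t\big\<\omega^N_r,\sigma_k\cdot\nabla\phi\big\>\,\d W^k_r}_{=:I_3},$$
and to estimate $\E[I_j^4]$ one term at a time, combining them through the elementary convexity bound $\E\big[(I_1+I_2+I_3)^4\big]\leq 27\big(\E[I_1^4]+\E[I_2^4]+\E[I_3^4]\big)$.

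For the two Lebesgue integrals $I_1,I_2$ I would use Jensen's inequality in the time variable to bring the fourth power inside, $I_j^4\leq(t-s)^3\int_s^t(\,\cdot\,)^4\,\d r$, and then apply the moment bounds of Lemma \ref{2-lem-integrability}. For $I_1$ this gives $\E[I_1^4]\leq C(t-s)^4\|H_\phi\|_\infty^4$, and \eqref{non-linear-drift} replaces $\|H_\phi\|_\infty^4$ by $C\|\nabla^2\phi\|_\infty^4$; for $I_2$, the moment bound applied to $g=\Delta\phi$ yields $\E[I_2^4]\leq C(t-s)^4\|\Delta\phi\|_\infty^4\leq C(t-s)^4\|\nabla^2\phi\|_\infty^4$. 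Since $t,s\in[0,T]$, one has $(t-s)^4\leq T^2(t-s)^2$, so both contributions are dominated by $C(t-s)^2\|\nabla^2\phi\|_\infty^4$.

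The martingale term $I_3$ is the main obstacle, since the sum over all $k\in\Lambda_N$ must be controlled without losing the compensating factor $\eps_N$. I would treat it as a single continuous martingale $M_r=\sum_{k\in\Lambda_N}\int_s^r\big\<\omega^N_u,\sigma_k\cdot\nabla\phi\big\>\,\d W^k_u$, whose bracket (the $W^k$ being independent) is $\langle M\rangle_r=\int_s^r B_u\,\d u$ with $B_u:=\sum_{k\in\Lambda_N}\big\<\omega^N_u,\sigma_k\cdot\nabla\phi\big\>^2$. The Burkholder--Davis--Gundy inequality and Cauchy--Schwarz in time then give $\E[I_3^4]=64\,\eps_N^4\,\E[M_t^4]\leq C\,\eps_N^4\,\E\big[\langle M\rangle_t^2\big]\leq C\,\eps_N^4\,(t-s)\int_s^t\E[B_u^2]\,\d u$. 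The key is that $B_u$ is itself a double pairing, $B_u=\big\<\omega^N_u\otimes\omega^N_u,G_N\big\>$, against the symmetric kernel $G_N(x,y)=\sum_{k\in\Lambda_N}(\sigma_k\cdot\nabla\phi)(x)\,(\sigma_k\cdot\nabla\phi)(y)$; using $|\sigma_k(x)|\leq|k|^{-1}$ and $\sum_{k\in\Lambda_N}|k|^{-2}=\eps_N^{-2}$ one gets the uniform bound $\|G_N\|_\infty\leq\eps_N^{-2}\|\nabla\phi\|_\infty^2$, so that $\tilde G_N:=\eps_N^2 G_N$ obeys $\|\tilde G_N\|_\infty\leq\|\nabla\phi\|_\infty^2$ independently of $N$.

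Consequently $\eps_N^2 B_u=\big\<\omega^N_u\otimes\omega^N_u,\tilde G_N\big\>$, and by the first moment estimate of Lemma \ref{2-lem-integrability} (with $p=2$) together with the temporal stationarity of $\omega^N_\cdot$ from Proposition \ref{prop-1}, one finds $\eps_N^4\,\E[B_u^2]=\E\big[\big\<\omega^N_u\otimes\omega^N_u,\tilde G_N\big\>^2\big]\leq C_2\|\tilde G_N\|_\infty^2\leq C_2\|\nabla\phi\|_\infty^4$, uniformly in $u$ and $N$. This gives $\E[I_3^4]\leq C(t-s)^2\|\nabla\phi\|_\infty^4$, and summing the three estimates yields the claim. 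I expect the only genuinely delicate point to be the bookkeeping of the powers of $\eps_N$ in $I_3$: it is exactly the cancellation between the prefactor $\eps_N^4$ and the $\eps_N^{-2}$ growth of $\|G_N\|_\infty$ --- a manifestation of the normalization \eqref{prop-1.1} --- that keeps the bound uniform in $N$.
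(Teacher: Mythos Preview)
Your proof is correct and follows essentially the same structure as the paper's: the same three-term decomposition from \eqref{prop-stationary.1}, H\"older's inequality plus Lemma \ref{2-lem-integrability} for the two Lebesgue integrals, and Burkholder--Davis--Gundy plus Cauchy--Schwarz in time for the martingale term. The only difference is in the treatment of $\E[B_u^2]$: the paper expands the square into a double sum $\sum_{k,l}\E\big[\langle\omega^N_r,\sigma_k\cdot\nabla\phi\rangle^2\langle\omega^N_r,\sigma_l\cdot\nabla\phi\rangle^2\big]$, applies Cauchy--Schwarz to each cross term, and invokes the single-pairing bound $\E[|\langle\omega^N_t,g\rangle|^4]\leq C\|g\|_\infty^4$, whereas you more elegantly package $B_u$ as a double pairing $\langle\omega^N_u\otimes\omega^N_u,G_N\rangle$ and invoke the double-pairing bound directly --- both routes produce the same $\eps_N^{-4}$ factor that cancels against the prefactor.
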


\begin{proof}
By \eqref{prop-stationary.1}, one has
  \begin{equation}\label{lem-estimate.1}
  \aligned
  \big\< \omega^N_t -\omega^N_s, \phi\big\>=&\, \int_s^t \big\<\omega^N_r\otimes \omega^N_r, H_\phi \big\>\, \d r + \int_s^t \<\omega^N_r, \Delta \phi \>\,\d r\\
  &\, + 2\sqrt{2}\, \eps_N \sum_{k\in \Lambda_N} \int_s^t \big\< \omega^N_r,\sigma_k \cdot \nabla \phi \big\>\,\d W^k_r.
  \endaligned
  \end{equation}
First, H\"older's inequality leads to
  \begin{equation}\label{lem-estimate-2}
  \aligned
  \E\bigg[\bigg(\int_s^t \big\<\omega^N_r\otimes \omega^N_r, H_\phi\big\>\, \d r\bigg)^{\! 4} \bigg]
  &\leq (t-s)^3\, \E\bigg[\int_s^t \big\<\omega^N_r\otimes \omega^N_r, H_\phi \big\>^4\, \d r\bigg]\\
  &\leq (t-s)^3 \int_s^t C\|\nabla^2 \phi\|_\infty^4 \,\d r = C (t-s)^4  \|\nabla^2 \phi\|_\infty^4,
  \endaligned
  \end{equation}
where in the second step we used Lemma \ref{2-lem-integrability} and \eqref{non-linear-drift}. In the same way,
  \begin{equation}\label{lem-estimate-3}
  \E\bigg[\bigg(\int_s^t \<\omega^N_r, \Delta \phi \>\,\d r \bigg)^{\! 4} \bigg] \leq (t-s)^3\, \E\bigg[\int_s^t \<\omega^N_r, \Delta \phi \>^4\, \d r\bigg] \leq C(t-s)^4  \|\Delta \phi \|_\infty^4.
  \end{equation}

Next, by Burkholder's inequality,
  \begin{equation*}
  \aligned
  \E\bigg[\bigg(\varepsilon_N \sum_{k\in \Lambda_N} \int_s^t \big\<\omega^N_r,\sigma_k \cdot \nabla \phi \big\>\,\d W^k_r\bigg)^{\! 4} \bigg]
  &\leq C\varepsilon_N^4 \E \bigg[\bigg(\int_s^t \sum_{k\in \Lambda_N} \big\<\omega^N_r,\sigma_k \cdot \nabla \phi\big\>^2\,\d r\bigg)^{\! 2} \bigg]\\
  &\leq C\varepsilon_N^4 (t-s) \int_s^t \E \bigg[\bigg(\sum_{k\in \Lambda_N} \big\<\omega^N_r,\sigma_k \cdot \nabla \phi \big\>^2 \bigg)^{\! 2} \bigg] \d r.
  \endaligned
  \end{equation*}
Cauchy's inequality and Lemma \ref{2-lem-integrability} imply that
  $$  \aligned
  \E \bigg[\bigg(\sum_{k\in \Lambda_N} \big\<\omega^N_r,\sigma_k \cdot \nabla \phi \big\>^2 \bigg)^{\! 2} \bigg]
  &= \sum_{k,l \in \Lambda_N} \E \big[ \big\<\omega^N_r,\sigma_k \cdot \nabla \phi \big\>^2 \big\<\omega^N_r,\sigma_l \cdot \nabla \phi \big\>^2 \big] \\
  &\leq \sum_{k,l \in \Lambda_N} \big[\E \big\<\omega^N_r,\sigma_k \cdot \nabla \phi\big\>^4\big]^{1/2} \big[\E \big\<\omega^N_r,\sigma_l \cdot \nabla \phi\big\>^4\big]^{1/2}\\
  &\leq C\bigg(\sum_{k\in \Lambda_N} \|\sigma_k \cdot \nabla \phi\|_\infty^2\bigg)^{\! 2} \leq \tilde C \|\nabla \phi\|_\infty^4 \bigg(\sum_{k\in \Lambda_N} \|\sigma_k\|_\infty^2\bigg)^{\! 2}.
  \endaligned $$
Note that
  $$\sum_{k\in \Lambda_N} \|\sigma_k\|_\infty^2 = \sum_{k\in \Lambda_N} \frac1{|k|^2} = \varepsilon_N^{-2},$$
hence,
  $$\E \bigg[\bigg(\sum_{k\in \Lambda_N} \big\<\omega^N_r,\sigma_k \cdot \nabla \phi \big\>^2 \bigg)^{\! 2} \bigg] \leq C \|\nabla \phi\|_\infty^4\, \varepsilon_N^{-4}.$$
This implies
  $$\E\bigg[\bigg(\varepsilon_N \sum_{k\in \Lambda_N} \int_s^t \big\<\omega^N_r,\sigma_k \cdot \nabla \phi \big\>\,\d W^k_r\bigg)^{\! 4} \bigg] \leq C(t-s)^2 \|\nabla \phi\|_\infty^4. $$
Combining this estimate with \eqref{lem-estimate.1}--\eqref{lem-estimate-3}, we obtain the desired result.
\end{proof}

Applying Lemma \ref{lem-estimate} with $\phi(x)= e_k(x)$ leads to
  $$\E\big[ \big| \big\<\omega^N_t- \omega^N_s, e_k \big\> \big|^4 \big] \leq C (t-s)^2 |k|^8, \quad k\in \Z^2_0 .$$
As a result, by Cauchy's inequality,
  $$\aligned
  \E \big( \big\|\omega^N_t- \omega^N_s \big\|_{H^{-\kappa}}^4 \big) &= \E\bigg[\bigg( \sum_k \big(1+|k|^2 \big)^{-\kappa} \big|\big\<\omega^N_t- \omega^N_s, e_k \big\> \big|^2 \bigg)^{\! 2} \bigg]\\
  &\leq \bigg(\sum_k \big(1+|k|^2 \big)^{-\kappa}\bigg) \sum_k \big(1+|k|^2 \big)^{-\kappa} \E \big[ \big|\big\<\omega^N_t- \omega^N_s, e_k \big\> \big|^4 \big]\\
  &\leq \tilde C (t-s)^2\sum_k \big(1+|k|^2 \big)^{-\kappa} |k|^8 \leq \hat C (t-s)^2,
  \endaligned$$
since $2\kappa -8 >2$ due to the choice of $\kappa$. Consequently,
  $$\E \bigg[\int_0^T\! \int_0^T \frac{ \big\|\omega^N_t- \omega^N_s \big\|_{H^{-\kappa}}^4} {|t-s|^{7/3}}\,\d t\d s\bigg] \leq \hat C \int_0^T\! \int_0^T \frac{|t-s|^2} {|t-s|^{7/3}}\,\d t\d s <\infty.$$
The proof of the boundedness in probability of $\big\{Q^N\big\}_{N\geq 1}$ in $W^{1/3,4}\big(0,T; H^{-\kappa}(\T^2) \big)$ is complete.

Combining this result with \eqref{estimate-1} and the discussions below Lemma \ref{2-lem-integrability}, we conclude that $\{Q^N\}_{N\geq 1}$ is tight in $\mathcal{X} = C\big([0,T], H^{-1-}(\T^2) \big)$.

Since we are dealing with the SDEs \eqref{prop-stationary.1}, we need to consider $Q^N$ together with the distribution of Brownian motions. Although we use only finitely many Brownian motions in \eqref{prop-stationary.1}, here we consider for simplicity the whole family $\big\{ (W^k_t)_{0\leq t\leq T}: k\in \Z_0^2 \big\}$. To this end, we assume $\R^{\Z_0^2}$ is endowed with the metric
  $$d_{\Z_0^2}(a,b)= \sum_{k\in \Z_0^2} \frac{|a_k-b_k| \wedge 1}{2^{|k|}}, \quad a,b \in \R^{\Z_0^2}.$$
Then $\big( \R^{\Z_0^2}, d_{\Z_0^2} \big)$ is separable and complete (see \cite[Example 1.2, p.9]{Billingsley}). The distance in $\mathcal Y:= C\big([0,T], \R^{\Z_0^2} \big)$ is given by
  $$d_{\mathcal Y}(w,\hat w) = \sup_{t\in [0,T]} d_{\Z_0^2}(w(t), \hat w(t)),\quad w, \hat w \in \mathcal Y,$$
which makes $\mathcal Y$ a Polish space. Denote by $\mathcal W$ the law on $\mathcal Y$ of the sequence of independent Brownian motions $\big\{ (W^k_t)_{0\leq t\leq T}: k\in \Z_0^2\big\}$.

To simplify the notations, we write $W_\cdot= (W_t)_{0\leq t\leq T}$ for the whole sequence of processes $\big\{ (W^k_t)_{0\leq t\leq T}: k\in \Z_0^2 \big\}$ in $\mathcal Y$. Denote by $P^N$ the joint law of $\big(\omega^N_\cdot, W_\cdot \big)$ on $\mathcal X \times \mathcal Y,\, N\geq 1$. Since the marginal laws $\big\{ Q^N \big\}_{N\in \N}$ and $\{\mathcal W\}$ are respectively tight on $\mathcal X$ and $\mathcal Y$, we conclude that $\big\{ P^N \big\}_{N\in \N}$ is tight on $\mathcal X \times \mathcal Y$. By Skorokhod's representation theorem, there exist a subsequence $\{N_i \}_{i\in \N}$ of integers, a probability space $\big(\hat \Theta, \hat{\mathcal F}, \hat \P \big)$ and stochastic processes $\big(\hat \omega^{N_i}_\cdot, \hat W^{N_i}_\cdot\big)$ on this space with the corresponding laws $P^{N_i}$, and converging $\hat\P$-a.s. in $\mathcal X\times \mathcal Y$ to a limit $\big(\hat\omega_\cdot, \hat W_\cdot \big)$. We are going to prove that $\hat\omega_\cdot$, or more precisely another closely related process, solves the vorticity form of the Navier--Stokes equation with a suitable cylindrical Brownian motion.

We want to identify the approximating processes on the new probability space as random point vortices. For this purpose, we follow the discussions above \cite[Lemma 28]{F1} and enlarge the probability space $\big(\hat \Theta, \hat{\mathcal F}, \hat \P \big)$, so that it contains certain independent r.v.'s we need. The rough idea is to apply a random permutation to an $(\R\times \T^2)^N$-valued r.v. which corresponds, via the mapping \eqref{mapping}, to a r.v. with values in $\mathcal M_N(\T^2)$, see the end of \textbf{Step 1} in the proof of \cite[Lemma 28]{F1} for more details.  Denote by $\big(\tilde \Theta, \tilde {\mathcal F}, \tilde \P \big)$ a probability space on which, for every $N\geq 1$, we define a uniformly distributed random permutation $\tilde s_N: \tilde \Theta \to \Sigma_N$, where $\Sigma_N$ is the permutation group of order $N$. Define the product probability space
  \begin{equation}\label{product-space}
  (\Theta, \mathcal F, \P ) =\big(\hat \Theta\times \tilde \Theta, \hat{\mathcal F}\otimes \tilde {\mathcal F}, \hat \P \otimes \tilde \P\big)
  \end{equation}
and the new processes
  $$\big(\omega^{N_i}, W^{N_i}\big) = \big(\hat \omega^{N_i}, \hat W^{N_i}\big)\circ \pi_1, \quad (\omega, W) = \big(\hat \omega, \hat W \big)\circ \pi_1, \quad  s_N = \tilde s_N \circ \pi_2,$$
where $\pi_1$ and $\pi_2$ are the projections on $\hat \Theta\times \tilde \Theta$. Here, we slightly abuse the notations by denoting the final probability spaces and processes like the original ones. In the sequel we always consider the processes on the new probability space.

First, by Proposition \ref{prop-1}, it is easy to show

\begin{lemma}\label{lem-absolute-continuity}
The new process $\omega_\cdot$ is stationary and for every $t\in [0,T]$, the law $\mu_t$ of $\omega_t$ on $H^{-1-}(\T^2)$ is the white noise measure $\mu$.
\end{lemma}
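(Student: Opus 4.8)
The plan is to read off both assertions from the stationarity already established in Proposition \ref{prop-1}, from the weak convergence $\mu_N^0 \to \mu$ of \cite[Proposition 21]{F1}, and from the $\hat\P$-a.s.\ convergence supplied by Skorokhod's theorem.

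First I would check that passing from $\hat\omega^{N_i}$ to the new process $\omega^{N_i} = \hat\omega^{N_i}\circ\pi_1$ does not alter its law on $\mathcal X$. Since $\pi_1$ is the projection of the product space \eqref{product-space} onto its first factor $\hat\Theta$, and $\hat\omega^{N_i}_\cdot$ carries the law $Q^{N_i}$ there, the process $\omega^{N_i}_\cdot$ on $(\Theta,\mathcal F,\P)$ again has law $Q^{N_i}$; in particular it has the same finite-dimensional distributions as the original point vortex process, and these are unaffected by the enlargement through the permutation factor $\tilde\Theta$. By Proposition \ref{prop-1} it is therefore stationary, with one-dimensional marginal $\mu_{N_i}^0$ at every time, and the convergence $\omega^{N_i}_\cdot \to \omega_\cdot$ holds $\P$-a.s.\ in $\mathcal X$.

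For stationarity of $\omega_\cdot$, I would use that for any finite collection of times $t_1,\dots,t_m\in[0,T]$ the evaluation map $w \mapsto (w(t_1),\dots,w(t_m))$ is continuous on $\mathcal X$, so the a.s.\ convergence in $\mathcal X$ forces the finite-dimensional distributions of $\omega^{N_i}_\cdot$ to converge weakly to those of $\omega_\cdot$. Shift-invariance of these distributions is preserved under weak limits, and hence $\omega_\cdot$ inherits the stationarity of the $\omega^{N_i}_\cdot$. For the marginal law, fix $t\in[0,T]$: continuity of $w\mapsto w(t)$ together with the a.s.\ convergence gives $\omega^{N_i}_t \to \omega_t$ in law on $H^{-1-}(\T^2)$, while the law of $\omega^{N_i}_t$ equals $\mu_{N_i}^0$, which converges weakly to the white noise measure $\mu$ by \cite[Proposition 21]{F1}. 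Uniqueness of the weak limit then identifies the law $\mu_t$ of $\omega_t$ with $\mu$.

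I do not expect a genuine obstacle: the whole argument is a soft passage to the limit. The two points that must be handled with a little care are the bookkeeping that $\pi_1$ leaves the path law intact, so that the marginals remain exactly $\mu_{N_i}^0$ after enlarging the space, and the elementary but essential observation that both stationarity and the fixed-time marginals are encoded in finite-dimensional distributions, which are continuous functionals on $\mathcal X$ and therefore stable under the a.s.\ convergence coming from Skorokhod's representation.
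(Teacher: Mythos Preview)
Your proposal is correct and follows exactly the route the paper intends: the paper itself does not spell out a proof but merely says ``by Proposition \ref{prop-1}, it is easy to show'', and your argument supplies precisely the standard details behind this remark---stationarity and marginals $\mu_{N_i}^0$ from Proposition \ref{prop-1} are passed to the limit via the a.s.\ convergence from Skorokhod's theorem together with \cite[Proposition 21]{F1}.
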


Similarly to \cite[Lemma 3.5]{FL-1}, we can identify the structure of $\omega^{N_i}_t$ as a sum of Dirac masses.

\begin{lemma}\label{lem-1}
The process $\omega^{N_i}_t$  on the new probability space can be represented in the form $\frac1{\sqrt {N_i}} \sum_{j=1}^{N_i} \xi_j \delta_{X^{j, N_i}_t}$, where
  \begin{equation}\label{lem-1.0}
  \big(\big(\xi_1, X^{1,N_i}_0\big), \ldots, \big(\xi_{N_i}, X^{N_i,N_i}_0\big)\big)
  \end{equation}
is a random vector with law $\lambda_{N_i}^0$ and $\big(X^{1,N_i}_t, \ldots, X^{N_i,N_i}_t\big)$ solves the stochastic system \eqref{stoch-point-vortices-1} with the initial condition $\big(X^{1,N_i}_0, \ldots, X^{N_i,N_i}_0 \big)$ and new Brownian motions $\big\{\big( W^{N_i,k}_t \big): k\in \Lambda_{N_i} \big\}$ defined above.
\end{lemma}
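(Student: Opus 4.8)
My plan is to build, on the enlarged space $(\Theta,\mathcal F,\P)$ of \eqref{product-space}, an \emph{ordered} family of non-colliding vortex trajectories whose unlabelled superposition is the given process $\omega^{N_i}_\cdot=\hat\omega^{N_i}_\cdot\circ\pi_1$, and then to use the random permutation $s_{N_i}=\tilde s_{N_i}\circ\pi_2$ to restore the exchangeable initial law $\lambda_{N_i}^0$. The starting point is that $Q^{N_i}$, the law of $\hat\omega^{N_i}_\cdot$, is carried, by Proposition \ref{prop-1}, by paths $\omega_\cdot$ with $\omega_t\in\mathcal M_{N_i}(\T^2)$ for every $t$ whose $N_i$ atoms remain in $\Delta_{N_i}^c$; thus the atoms trace out $N_i$ disjoint continuous curves in $\T^2$. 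First I would fix once and for all a measurable section $\Phi_{N_i}$ of the trajectory-level quotient map $\mathcal T_{N_i}$ --- for instance by labelling the atoms at time $0$ according to the lexicographic order of their positions and then following each curve continuously, which is licit precisely because no two curves ever meet --- so as to obtain a measurable assignment $\hat\omega^{N_i}_\cdot\mapsto\big((\xi_j,X^{j,N_i}_\cdot)\big)_{j=1}^{N_i}$ with $\mathcal T_{N_i}\circ\Phi_{N_i}=\mathrm{id}$. Applying $s_{N_i}$ to the labels of this tuple, while leaving the driving noise $W^{N_i}$ untouched, produces the candidate ordered process; since $\mathcal T_{N_i}$ is permutation-invariant, the unlabelled superposition is unchanged, so $\omega^{N_i}_t=\frac1{\sqrt{N_i}}\sum_j\xi_j\delta_{X^{j,N_i}_t}$ holds by construction.

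The second step is to identify the joint law of this permuted tuple together with the Brownian motions. On the original space the joint law $\mathcal P^{N_i}_{\mathrm{ord}}$ of (ordered vortices, driving noise) pushes forward under $\mathcal T_{N_i}\times\mathrm{id}$ to $P^{N_i}$, and it is invariant under relabelling the vortices while keeping the Brownian motions fixed, because the initial data are i.i.d. and the dynamics \eqref{stoch-point-vortices-1} treat the labels symmetrically. On the no-collision locus the $\Sigma_{N_i}$-action is free, so a symmetric measure is reconstructed from its image by spreading mass uniformly over each $N_i!$-point orbit; hence $\mathcal P^{N_i}_{\mathrm{ord}}$ is the \emph{unique} symmetric lift of $P^{N_i}$. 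Now $\big(\Phi_{N_i}\times\mathrm{id}\big)_\#P^{N_i}$ is a lift of $P^{N_i}$, and averaging it over the uniform random permutation $s_{N_i}$ is exactly its symmetrization; by uniqueness this symmetrization coincides with $\mathcal P^{N_i}_{\mathrm{ord}}$. Reading off the time-$0$ marginal then gives that \eqref{lem-1.0} has law $\lambda_{N_i}^0$, as claimed.

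The last step is to transfer the equation \eqref{stoch-point-vortices-1} across the change of space. Since $\mathcal P^{N_i}_{\mathrm{ord}}$ is the law of a genuine solution of \eqref{stoch-point-vortices-1}, and the Stratonovich integrals there have continuous integrands and are therefore almost sure limits of Riemann--Stratonovich sums, the set on which \eqref{stoch-point-vortices-1} holds is a measurable functional identity in the joint path $\big(X^{\cdot,N_i},W^{N_i}\big)$ of full $\mathcal P^{N_i}_{\mathrm{ord}}$-measure; equality of laws then forces the reconstructed process to solve \eqref{stoch-point-vortices-1} driven by $\{W^{N_i,k}\}$ on the new space as well. This is the scheme of \cite[Lemma 28]{F1} and \cite[Lemma 3.5]{FL-1}, to which I would refer for the routine verifications.

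I expect the main obstacle to lie in this final transfer rather than in the permutation bookkeeping: one must verify that the $\{W^{N_i,k}\}$ are genuine Brownian motions with respect to the filtration generated by the reconstructed system and that $X^{\cdot,N_i}$ is adapted to it, so that the pathwise stochastic-integral functional built on the old space really coincides with the Stratonovich integral computed on the new one. These are law-level properties --- independence of increments and progressive measurability can be read off from $P^{N_i}$ --- but combining them with the measurable continuous tracking of the non-colliding atoms is the delicate point where the construction has to be carried out with care.
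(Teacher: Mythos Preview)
Your proposal is correct and follows essentially the same scheme the paper relies on: the paper does not give a self-contained proof of this lemma but refers to \cite[Lemma 3.5]{FL-1} (and, via the discussion above \eqref{product-space}, to \cite[Lemma 28]{F1}), and your outline---measurable labelling of the non-colliding atoms by a section $\Phi_{N_i}$, symmetrization via the uniform random permutation $s_{N_i}$ on the enlarged space, identification of the symmetrized law with the original ordered law $\mathcal P^{N_i}_{\mathrm{ord}}$ by uniqueness of the symmetric lift, and transfer of \eqref{stoch-point-vortices-1} by equality in law---is exactly the argument of those references. Your closing remarks about checking that $\{W^{N_i,k}\}$ are Brownian with respect to the joint filtration and that the Stratonovich integrals transfer are the right points to flag, and they are handled in \cite{FL-1} precisely as you describe.
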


As a consequence (cf. Proposition \ref{prop-1}), for any $i\in \N$ and $\phi\in C^\infty(\T^2)$, the new process $\omega^{N_i}_\cdot$ satisfies $\P$-a.s., for all $t\in [0,T]$,
  \begin{equation}\label{eq-seq}
  \aligned \big\< \omega^{N_i}_t, \phi\big\> =&\, \big\< \omega^{N_i}_0, \phi\big\> + \int_0^t \big\<\omega^{N_i}_s \otimes \omega^{N_i}_s, H_\phi \big\>\, \d s +\int_0^t \big\< \omega^{N_i}_s, \Delta \phi \big\>\,\d s \\
  &\, +2\sqrt{2}\,\eps_{N_i} \sum_{k\in \Lambda_{N_i}} \int_0^t \big\< \omega^{N_i}_s, \sigma_k \cdot \nabla \phi\big\>\,\d  W^{N_i,k}_s.
  \endaligned
  \end{equation}

\begin{remark}
Using the a.s. convergence of $\omega^{N_i}$ to $\omega$ in $C\big([0,T], H^{-1-}(\T^2)\big)$, we can show that the quantities in the first line of \eqref{eq-seq} converge respectively in $L^2(\Theta, \P )$ to
  $$\<\omega_t, \phi\>,\quad \<\omega_0, \phi\> ,\quad \int_0^t \big\<\omega_r\otimes \omega_r, H_\phi\big\>\, \d r,\quad \int_0^t \< \omega_r, \Delta \phi \>\,\d r,$$
see \cite[Proposition 3.6]{FL-1} for details. However, the term involving stochastic integrals does not converge strongly to some limit. Therefore, we can only seek for a weaker form of convergence.
\end{remark}

Before proceeding further, we introduce some notations. By $\Lambda\Subset \Z_0^2$ we mean that $\Lambda$ is a finite set. Let $\Pi_\Lambda: H^{-1-}(\T^2) \to \text{span}\{e_k: k\in \Lambda\}$ be the projection operator: $\Pi_\Lambda\omega = \sum_{l\in \Lambda} \<\omega, e_l\> e_l$. We shall use the family of cylindrical functions below:
  $$\mathcal{FC}_b^2 = \big\{F(\omega)= f(\<\omega, e_l\>; l\in \Lambda) \mbox{ for some } \Lambda\Subset \Z_0^2 \mbox{ and } f\in C_b^2\big(\R^\Lambda\big)\big\},$$
where $\R^\Lambda$ is the $(\#\Lambda)$-dimensional Euclidean space. To simplify the notations, sometimes we write the cylindrical functions as $F= f\circ \Pi_\Lambda$, and for $l,m\in \Lambda$, $f_l(\omega) = (\partial_l f)(\Pi_\Lambda \omega)$ and $f_{l,m}(\omega) = (\partial_l\partial_m f)(\Pi_\Lambda \omega)$. Denote by $\L_\infty$ the generator of the equation \eqref{vorticity-NSE}: for any cylindrical function $F= f \circ \Pi_\Lambda$ with $\Lambda \Subset \Z_0^2$,
  \begin{equation}\label{generator}
  \L_\infty F= 4 \pi^2 \sum_{l\in \Lambda} |l|^2 \big[f_{l,l}(\omega) - f_l(\omega) \< \omega, e_l \>\big] - \< u(\omega)\cdot \nabla\omega, D F\>,
  \end{equation}
where the drift part
  $$\< u(\omega)\cdot \nabla\omega, D F\>= - \sum_{l\in \Lambda} f_l(\omega) \big\<\omega\otimes \omega, H_{e_l}\big\>.$$
Finally we introduce the notation
  \begin{equation}\label{coefficients}
  C_{k,l} = \frac{k^\perp \cdot l}{|k|^2} ,\quad k,l \in\Z_0^2.
  \end{equation}
We have the following useful identity (cf. \cite[Lemma 3.4]{FL-2} for the proof):
  \begin{equation}\label{useful-identity}
  \sum_{k\in \Lambda_N} C_{k,l}^2 =\frac12 \eps_N^{-2} |l|^2
  \end{equation}

Now we prove that the limit $\omega$ is a martingale solution of the operator $\L_\infty$.

\begin{proposition}\label{approx-prop-1}
For any $F\in \mathcal{FC}_b^2$,
  \begin{equation}\label{approx-prop-1.1}
  M^F_t:= F(\omega_t ) -F(\omega_0 ) - \int_0^t \L_\infty F(\omega_s ) \,\d s
  \end{equation}
is an ${\mathcal F}_t = \sigma(\omega_s: s\leq t)$-martingale.
\end{proposition}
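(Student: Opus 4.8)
The plan is to deduce the martingale property from the exact Itô martingales of the finite approximations, testing only against functionals of the limit path so that the filtration $\mathcal F_t=\sigma(\omega_s:s\le t)$ is produced directly. Fix $F=f\circ\Pi_\Lambda\in\mathcal{FC}_b^2$. I would first apply Itô's formula to $F(\omega^{N_i}_t)$ along the equation \eqref{eq-seq}. Since $\Delta e_l=-4\pi^2|l|^2e_l$, the first-order (drift) terms reproduce exactly $\sum_{l}f_l(\omega)\big\<\omega\otimes\omega,H_{e_l}\big\>$ and $-4\pi^2\sum_l|l|^2f_l(\omega)\<\omega,e_l\>$, i.e. the transport and linear parts of $\L_\infty$ in \eqref{generator}, while the quadratic variation of the martingale part of \eqref{eq-seq} produces the second-order operator
$$\mathcal S_{N_i}F(\omega)=4\eps_{N_i}^2\sum_{l,m\in\Lambda}f_{l,m}(\omega)\sum_{k\in\Lambda_{N_i}}\big\<\omega,\sigma_k\cdot\nabla e_l\big\>\big\<\omega,\sigma_k\cdot\nabla e_m\big\>.$$
Writing $\L_{N_i}=\L_\infty+R_{N_i}$ with $R_{N_i}F=\mathcal S_{N_i}F-4\pi^2\sum_{l}|l|^2f_{l,l}$, Itô's formula says that $M^{F,N_i}_t:=F(\omega^{N_i}_t)-F(\omega^{N_i}_0)-\int_0^t(\L_\infty+R_{N_i})F(\omega^{N_i}_s)\,\d s$ is a true martingale, the stochastic integrals against the $W^{N_i,k}$ being genuine martingales thanks to the moment bounds of Lemma \ref{2-lem-integrability}.

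I would then fix $0\le s<t\le T$ and a bounded continuous $G$ of the path $(\omega_r)_{r\le s}$ and pass to the limit in the identity $\E\big[(M^{F,N_i}_t-M^{F,N_i}_s)\,G(\omega^{N_i}_{\cdot\wedge s})\big]=0$. The a.s. convergence $\omega^{N_i}\to\omega$ in $\mathcal X=C([0,T],H^{-1-})$ gives $\<\omega^{N_i}_r,e_l\>\to\<\omega_r,e_l\>$ for all $r,l$, so $F(\omega^{N_i}_t)\to F(\omega_t)$, $F(\omega^{N_i}_0)\to F(\omega_0)$ and $G(\omega^{N_i}_{\cdot\wedge s})\to G(\omega_{\cdot\wedge s})$, all boundedly and hence in every $L^p$. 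For the $\L_\infty$ drift I would use the $L^2$-convergence of $\int_s^t\big\<\omega^{N_i}_r\otimes\omega^{N_i}_r,H_{e_l}\big\>\,\d r$ and of $\int_s^t\<\omega^{N_i}_r,\Delta e_l\>\,\d r$ recorded in the Remark after \eqref{eq-seq} (following \cite[Proposition 3.6]{FL-1}), together with the boundedly convergent factors $f_l(\omega^{N_i}_r)$, $f_{l,l}(\omega^{N_i}_r)$ and the uniform integrability from Lemma \ref{2-lem-integrability}. Everything then reduces to showing that the remainder integral is negligible.

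The crux, and the step I expect to be the main obstacle, is to prove $\int_s^t R_{N_i}F(\omega^{N_i}_r)\,\d r\to0$ in $L^1$. By the stationarity of Proposition \ref{prop-1} this is at most $(t-s)\,\E\big[|R_{N_i}F(\omega^{N_i}_0)|\big]$, and since the $f_{l,m}$ are bounded it suffices to prove, for each fixed $l,m\in\Lambda$, that
$$A^{N}_{l,m}:=4\eps_{N}^2\sum_{k\in\Lambda_{N}}\big\<\omega^{N}_0,\sigma_k\cdot\nabla e_l\big\>\big\<\omega^{N}_0,\sigma_k\cdot\nabla e_m\big\>\longrightarrow 4\pi^2|l|^2\,\delta_{l,m}\quad\text{in }L^1.$$
The key observation is that $A^N_{l,m}=\big\<\omega^N_0\otimes\omega^N_0,g^N_{l,m}\big\>$ with $g^N_{l,m}(x,y)=4\eps_N^2\sum_k(\sigma_k\cdot\nabla e_l)(x)(\sigma_k\cdot\nabla e_m)(y)$, so it is exactly of the type controlled by Lemma \ref{2-lem-integrability}. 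Using the first-moment identity $\E\<\omega^N_0\otimes\omega^N_0,f\>=\int f(x,x)\,\d x$ together with the elementary computation $\sigma_k\cdot\nabla e_l=\sqrt2\,\pi\,C_{k,l}\,e_{-l}e_k$ and $\int e_{-l}e_{-m}e_k^2\,\d x=\delta_{l,m}$ off finitely many resonant $k$, the identity \eqref{useful-identity} yields $\E[A^N_{l,m}]=8\pi^2\eps_N^2\big(\sum_k C_{k,l}^2\big)\delta_{l,m}+O(\eps_N^2)=4\pi^2|l|^2\delta_{l,m}+o(1)$. For the variance I would feed the symmetrization of $g^N_{l,m}$ into the explicit second-moment formula of Lemma \ref{2-lem-integrability}: the middle term reproduces $(\E[A^N_{l,m}])^2$, while the other two are of order $\tfrac1N\int (g^N_{l,m})^2(x,x)\,\d x$ and $\int\!\!\int (g^N_{l,m})^2\,\d x\,\d y$, both tending to $0$ once the sums $\sum_k C_{k,l}^2$ are estimated using $\sum_{k}|k|^{-4}<\infty$. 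This is precisely the $L^2$-boundedness estimate carried out in the appendix, and I anticipate the genuine difficulty to lie here, because the exact law $\mu^0_N$ of $\omega^N_0$ differs from white noise and forces one to track the $N$-dependent terms rather than argue formally under $\mu$.

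Collecting these facts, the identity $\E\big[(M^{F,N_i}_t-M^{F,N_i}_s)\,G(\omega^{N_i}_{\cdot\wedge s})\big]=0$ passes to the limit to give $\E\big[(M^F_t-M^F_s)\,G\big]=0$; as $G$ ranges over bounded continuous functionals of $(\omega_r)_{r\le s}$ this is exactly $\E[M^F_t\mid\mathcal F_s]=M^F_s$. Integrability and $\mathcal F_t$-adaptedness of $M^F_t$ follow from the boundedness of $f,f_l,f_{l,l}$ and the white-noise moment bounds furnished by Lemma \ref{lem-absolute-continuity} and Lemma \ref{2-lem-integrability}, which completes the argument.
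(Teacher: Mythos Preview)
Your proposal is correct and follows essentially the same route as the paper: apply It\^o's formula to $F(\omega^{N_i}_t)$ via \eqref{eq-seq}, split the second-order term into the constant $4\pi^2|l|^2\delta_{l,m}$ plus a remainder using \eqref{useful-identity}, and kill the remainder using the $L^2$-bound of Proposition~\ref{prop-square-bound}. Your remainder $A^N_{l,m}-4\pi^2|l|^2\delta_{l,m}$ is exactly $\eps_N^2 R_{l,m}(\omega^N_0)$ (up to the harmless sign flip $l\mapsto -l$), so your variance argument via the second-moment formula of Lemma~\ref{2-lem-integrability} and the appendix computation are the same thing; in fact, by \eqref{prop-1.1} one has $g^N_{l,m}(x,x)=\nabla e_l(x)\cdot\nabla e_m(x)$, so $\E[A^N_{l,m}]=4\pi^2|l|^2\delta_{l,m}$ with no error term.

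The one genuine difference is in the choice of test functionals. The paper tests the pre-limit martingale identity against $\varphi(\omega^{N_i}_\cdot,W^{N_i}_\cdot)$, obtains the $\mathcal G_t=\sigma(\omega_s,W_s:s\le t)$-martingale property first, and then descends to $\mathcal F_t\subset\mathcal G_t$ by the tower property. You instead test only against $G(\omega^{N_i}_{\cdot\wedge s})$, which is legitimate because $G(\omega^{N_i}_{\cdot\wedge s})$ is $\mathcal F^{N_i}_s$-measurable and $M^{F,N_i}$ is an $\mathcal F^{N_i}_t$-martingale; this yields the $\mathcal F_t$-martingale property directly in the limit. Your shortcut is fine for the statement of Proposition~\ref{approx-prop-1} as written, whereas the paper's detour through $\mathcal G_t$ is what later allows it to construct the limit Brownian motions in Proposition~\ref{prop-BM}.
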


\begin{proof}
The proof below is analogous to that of \cite[Proposition 2.9]{FL-3}, but the processes $\tilde\omega^{N_i}_t$ involved there are processes of white noises on $\T^2$, while here $\omega^{N_i}_t$ are random point vortices. Recall the CONS defined in \eqref{ONB}. Taking $\phi= e_l$ in \eqref{eq-seq} for some $l\in \Z_0^2$, we have
  \begin{equation}\label{eq-seq-1}
  \aligned
  \d \big\< \omega^{N_i}_t, e_l\big\>&= \big\<\omega^{N_i}_t\otimes \omega^{N_i}_t, H_{e_l}\big\>\, \d t -4\pi^2 |l|^2 \big\< \omega^{N_i}_t, e_l \big\>\,\d t \\
  &\hskip13pt +2 \sqrt{2}\, \varepsilon_{N_i} \sum_{k\in \Lambda_{N_i}} \big\< \omega^{N_i}_t,\sigma_k \cdot \nabla e_l \big\>\,\d W^{{N_i}, k}_t.
  \endaligned
  \end{equation}
Therefore, for $l,m \in \Z_0^2$,
  \begin{equation*}
  \d \big\< \omega^{N_i}_t, e_l\big\> \cdot \d \big\< \omega^{N_i}_t, e_m\big\>= 8\varepsilon_{N_i}^2 \sum_{k\in \Lambda_{N_i}} \big\< \omega^{N_i}_t,\sigma_k \cdot \nabla e_l \big\> \big\< \omega^{N_i}_t,\sigma_k \cdot \nabla e_m \big\>\,\d t.
  \end{equation*}
It is easy to show that $\sigma_k \cdot \nabla e_l = \sqrt{2} \pi C_{k,l} e_k e_{-l}$; hence
  $$\aligned
  \big\< \omega^{N_i}_t,\sigma_k \cdot \nabla e_l \big\> \big\< \omega^{N_i}_t,\sigma_k \cdot \nabla e_m \big\> &= 2\pi^2 C_{k,l}C_{k,m} \big\< \omega^{N_i}_t, e_k e_{-l}\big\> \big\< \omega^{N_i}_t,e_k e_{-m} \big\> \\
  &= 2\pi^2 C_{k,l}C_{k,m} \Big[\big\< \omega^{N_i}_t, e_k e_{-l}\big\> \big\< \omega^{N_i}_t,e_k e_{-m} \big\> -\delta_{l,m} \Big]\\
  &\hskip13pt + 2\pi^2 \delta_{l,m} C_{k,l}^2.
  \endaligned$$
As a result,
  \begin{equation*}
  \aligned
  \d \big\< \omega^{N_i}_t, e_l\big\> \cdot \d \big\< \omega^{N_i}_t, e_m\big\> &= 16\pi^2 \varepsilon_{N_i}^2 \sum_{k\in \Lambda_{N_i}} C_{k,l}C_{k,m} \Big[\big\< \omega^{N_i}_t, e_k e_{-l}\big\> \big\< \omega^{N_i}_t,e_k e_{-m} \big\> -\delta_{l,m} \Big] \d t \\
  &\hskip13pt + 8\pi^2 \delta_{l,m} |l|^2 \,\d t,
  \endaligned
  \end{equation*}
where in the last step we have used \eqref{useful-identity}. To simplify the notations, we denote by
  $$R_{l,m}\big(\omega^{N_i}_t\big) = 8\pi^2 \sum_{k\in \Lambda_{N_i}} C_{k,l}C_{k,m} \Big[\big\< \omega^{N_i}_t, e_k e_{-l}\big\> \big\< \omega^{N_i}_t,e_k e_{-m} \big\> -\delta_{l,m} \Big].$$
Recall that $\omega^{N_i}_t$ has the law $\mu_{N_i}^0$ for any $t\in [0,T]$, thus $R_{l,m}\big(\omega^{N_i}_t\big)$ is bounded in $L^2\big([0,T]\times \Theta\big)$ by Proposition \ref{prop-square-bound} in the appendix. Finally, we get
  \begin{equation}\label{eq-seq-2}
  \d \big\< \omega^{N_i}_t, e_l\big\> \cdot \d \big\< \omega^{N_i}_t, e_m\big\> = 2 \varepsilon_{N_i}^2 R_{l,m}\big(\omega^{N_i}_t\big) \,\d t + 8\pi^2 \delta_{l,m} |l|^2 \,\d t.
  \end{equation}

By the It\^o formula and \eqref{eq-seq-1}, \eqref{eq-seq-2},
  \begin{equation*}
  \aligned
  \d F\big( \omega^{N_i}_t \big)= &\ \d f\big(\big\< \omega^{N_i}_t, e_l\big\>; l\in \Lambda\big)\\
  =&\ \sum_{l\in \Lambda} f_l\big( \omega^{N_i}_t \big) \Big[\big\< \omega^{N_i}_t\otimes \omega^{N_i}_t, H_{e_l}\big\> -4\pi^2 |l|^2 \big\< \omega^{N_i}_t, e_l \big\>\Big]\,\d t \\
  &\ + 2 \sqrt{2}\, \varepsilon_{N_i} \sum_{l\in \Lambda} f_l\big( \omega^{N_i}_t \big) \sum_{k\in \Lambda_{N_i}} \big\< \omega^{N_i}_t,\sigma_k \cdot \nabla e_l \big\>\,\d W^{{N_i}, k}_t \\
  &\ + \sum_{l, m\in \Lambda} f_{l,m}\big( \omega^{N_i}_t \big) \big[\varepsilon_{N_i}^2 R_{l,m}\big( \omega^{N_i}_t\big) + 4 \pi^2 \delta_{l,m} |l|^2 \big] \,\d t.
  \endaligned
  \end{equation*}
Recalling the operator $\L_\infty$ defined in \eqref{generator}, the above formula can be rewritten as
  \begin{equation}\label{Ito-formula}
  \d F\big( \omega^{N_i}_t \big) = \L_\infty F\big( \omega^{N_i}_t \big) \,\d t + \varepsilon_{N_i}^2 \zeta^{N_i}_t \,\d t + \d M^{N_i}_t,
  \end{equation}
where, by Proposition \ref{prop-square-bound},
  $$\zeta^{N_i}_t = \sum_{l, m\in \Lambda} f_{l,m}\big( \omega^{N_i}_t \big) R_{l,m}\big( \omega^{N_i}_t\big)$$
is bounded in $L^2\big([0,T]\times \Theta \big)$ since $\{f_{l,m} \}_{l, m\in \Lambda}$ are bounded, and the martingale part
  $$\d M^{N_i}_t= 2 \sqrt{2}\, \varepsilon_{N_i} \sum_{l\in \Lambda} f_l\big( \omega^{N_i}_t \big) \sum_{k\in \Lambda_{N_i}} \big\< \omega^{N_i}_t,\sigma_k \cdot \nabla e_l \big\>\,\d W^{{N_i}, k}_t. $$
Note that $M^{N_i}_t$ is a martingale w.r.t. the filtration
  $${\mathcal F}^{N_i}_t = \sigma\big( \omega^{N_i}_s, W^{N_i}_s: s\leq t\big),$$
where we denote by $W^{N_i}_s = \big\{ W^{N_i,k}_s \big\}_{k\in \Z_0^2}$.

Next, we show that the formula \eqref{Ito-formula} converges as $i\to \infty$ in a suitable sense. To this end, we follow the argument of \cite[p. 232]{DaPZ}. Fix any $0<s <t\leq T$. Take a real valued, bounded and continuous function $\varphi: C\big([0,s], H^{-1-} \times \R^{\Z_0^2} \big)\to \R$. By \eqref{Ito-formula}, we have
  $$\E\bigg[\bigg( F\big( \omega^{N_i}_t \big) -F\big( \omega^{N_i}_s \big) - \int_s^t \L_\infty F\big( \omega^{N_i}_r \big) \,\d r - \varepsilon_{N_i}^2 \int_s^t \zeta^{N_i}_r \,\d r \bigg) \varphi\big( \omega^{N_i}_\cdot, W^{N_i}_\cdot \big)\bigg] =0.$$
Since $F\in \mathcal{FC}_b^2$ and $\omega^{N_i}_t$ has the law $\mu_{N_i}^0$ for all $t\in [0,T]$, by Lemma \ref{2-lem-integrability}, all the terms in the round bracket are square integrable. Recall that, $\P$-a.s., $\big( \omega^{N_i}_\cdot, W^{{N_i}}_\cdot \big)$ converges to $\big( \omega_\cdot, W_\cdot \big)$ in $C\big([0,T], H^{-1-} \times \R^{\Z_0^2} \big)$. Repeating the treatment of the term $I^{N_k}_3$ in the proof of \cite[Proposition 3.6]{FL-1}, we can show the convergence of the term involving the nonlinear part in $\L_\infty F$; the other terms are simple. Thus, letting $i\to \infty$ in the above equality yields
  $$\E\bigg[\bigg( F(\omega_t ) -F(\omega_s ) - \int_s^t \L_\infty F(\omega_r ) \,\d r \bigg) \varphi\big(\omega_\cdot, W_\cdot \big)\bigg] =0.$$
The arbitrariness of $0<s<t$ and $\varphi: C\big([0,s], H^{-1-} \times \R^{\Z_0^2} \big)\to \R$ implies that $M^F_\cdot$ is a martingale with respect to the filtration ${\mathcal G}_t = \sigma\big( \omega_s, W_s: s\leq t\big),\, t\in [0,T]$. For any $0\leq s< t\leq T$, we have ${\mathcal F}_s\subset {\mathcal G}_s$, thus
  $$\E \big( M^F_t \big| {\mathcal F}_s \big)=  \E \Big[ \E \big( M^F_t \big| {\mathcal G}_s \big) \big| {\mathcal F}_s \Big] = \E \big[M^F_s \big| {\mathcal F}_s \big] = M^F_s ,$$
since $M^F_s$ is adapted to ${\mathcal F}_s $.
\end{proof}

At this stage, taking the cylinder functions $F(\omega) = \<\omega, e_l\>$ and $F(\omega) = \<\omega, e_l\> \<\omega, e_m\> \, (l, m\in \Z_0^2)$ and using L\'evy's characterization of Brownian motions, it is easy to show that (see \cite[Proposition 2.10]{FL-3} for details)

\begin{proposition}\label{prop-BM}
There exists a family of independent standard Brownian motions $\big\{ W^k_t: t\geq 0\big\}_{k\in \Z_0^2}$ such that $(\omega_\cdot, W_\cdot)$ solves \eqref{vorticity-NSE}, where $ W_t =\sum_{k\in \Z_0^2} W^{-k}_t e_k \frac{k^\perp}{|k|}$.
\end{proposition}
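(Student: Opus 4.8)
The plan is to manufacture the driving noise directly from the martingales supplied by Proposition~\ref{approx-prop-1}, and then to recognise the resulting equation for $\omega_\cdot$ as the weak vorticity form of \eqref{vorticity-NSE}. First I would feed the linear cylinder functions $F(\omega)=\<\omega, e_l\>$, $l\in\Z_0^2$, into Proposition~\ref{approx-prop-1}. For such $F$ one has $f_l\equiv 1$ with all other derivatives vanishing, so by \eqref{generator}, $\L_\infty F(\omega)= \<\omega\otimes\omega, H_{e_l}\> -4\pi^2|l|^2\<\omega, e_l\>$. Thus
$$M^l_t := \<\omega_t, e_l\> -\<\omega_0, e_l\> -\int_0^t \big[\<\omega_s\otimes\omega_s, H_{e_l}\> -4\pi^2|l|^2\<\omega_s, e_l\>\big]\,\d s$$
is a continuous $\mathcal F_t$-martingale, giving the semimartingale decomposition $\d\<\omega_t, e_l\>= b^l_t\,\d t +\d M^l_t$ with drift $b^l_t=\<\omega_t\otimes\omega_t, H_{e_l}\>-4\pi^2|l|^2\<\omega_t, e_l\>$.

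Next I would read off the quadratic variations. Applying Proposition~\ref{approx-prop-1} to $F(\omega)=\<\omega, e_l\>\<\omega, e_m\>$ produces a further $\mathcal F_t$-martingale whose compensator is $\int_0^t\L_\infty F(\omega_s)\,\d s$; on the other hand, the It\^o product rule applied to the two decompositions of $\<\omega_t, e_l\>$ and $\<\omega_t, e_m\>$ expresses the same quantity $\<\omega_t, e_l\>\<\omega_t, e_m\>$ as a drift built from $b^l$ and $b^m$, plus a martingale, plus $\langle M^l, M^m\rangle_t$. Comparing the two decompositions, the drifts cancel (both equal the same combination of $\<\omega_s, e_\cdot\>$ and $\<\omega_s\otimes\omega_s, H_{e_\cdot}\>$), so $\langle M^l, M^m\rangle_t$ agrees, up to a martingale, with a process of bounded variation, hence equals it. Specialising to $l=m$ isolates $\langle M^l\rangle_t = 8\pi^2|l|^2\,t$, whereas for $l\neq m$ the surviving deterministic part vanishes and $\langle M^l, M^m\rangle_t\equiv 0$. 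This is the intrinsic counterpart, on the limit space, of the bracket computation \eqref{eq-seq-2}: there the extra term $2\eps_{N_i}^2 R_{l,m}(\omega^{N_i}_t)$ was only $L^2$-bounded (Proposition~\ref{prop-square-bound}) and carried the prefactor $\eps_{N_i}^2\to 0$, so it leaves no trace in the limit.

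With the bracket in hand I would set $W^l_t := (2\sqrt 2\,\pi|l|)^{-1} M^l_t$, $l\in\Z_0^2$. The previous step gives $\langle W^l, W^m\rangle_t=\delta_{l,m}\,t$, so L\'evy's characterisation shows that $\{W^l_t\}_{l\in\Z_0^2}$ is a family of mutually independent standard Brownian motions. It then remains to verify that, with $W_t=\sum_{k}W^{-k}_t\, e_k\frac{k^\perp}{|k|}$, one has $\d M^l_t=\sqrt 2\,\<\nabla^\perp\!\cdot\d W_t, e_l\>$. Using $\nabla e_k=2\pi k\, e_{-k}$ a direct computation gives $\nabla^\perp\!\cdot\big(e_k\frac{k^\perp}{|k|}\big)=2\pi|k|\,e_{-k}$, whence $\sqrt 2\,\<\nabla^\perp\!\cdot\d W_t, e_l\>=2\sqrt 2\,\pi|l|\,\d W^l_t=\d M^l_t$. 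Substituting into $\d\<\omega_t, e_l\>=b^l_t\,\d t+\d M^l_t$ and recalling $\<\omega_t\otimes\omega_t, H_{e_l}\>=-\<u(\omega_t)\cdot\nabla\omega_t, e_l\>$ together with $-4\pi^2|l|^2\<\omega_t, e_l\>=\<\Delta\omega_t, e_l\>$, we recover the weak form of \eqref{vorticity-NSE} tested against each $e_l$, hence against every $\phi\in C^\infty(\T^2)$ by linearity and density.

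I expect the main obstacle to be the second paragraph: correctly matching the two semimartingale decompositions of $\<\omega_t, e_l\>\<\omega_t, e_m\>$ and reading off the bracket without an a priori It\^o equation for the limit $\omega_\cdot$. The key point is that the limiting martingale problem only sees the deterministic part $8\pi^2|l|^2\,t$, so the quadratic-variation structure must be reconstructed purely from the martingale property of Proposition~\ref{approx-prop-1}; tracking the constants through the Fourier identity $\nabla^\perp\!\cdot(e_k\frac{k^\perp}{|k|})=2\pi|k|e_{-k}$ to land on the exact normalisation $\sqrt 2$ in front of $\d W$ is then routine but must be done with care.
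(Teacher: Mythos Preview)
Your proposal is correct and follows essentially the same route as the paper: the paper's proof consists of the single sentence ``taking the cylinder functions $F(\omega)=\<\omega,e_l\>$ and $F(\omega)=\<\omega,e_l\>\<\omega,e_m\>$ and using L\'evy's characterisation of Brownian motions'' (deferring the computation to \cite[Proposition~2.10]{FL-3}), and what you have written is exactly a fleshed-out version of that. Two minor points to watch when executing it: the linear and bilinear test functions are not literally in $\mathcal{FC}_b^2$, so strictly speaking a truncation/localisation step is needed to invoke Proposition~\ref{approx-prop-1}; and the identity $\nabla e_k=2\pi k\,e_{-k}$ carries a sign depending on whether $k\in\Z^2_+$ or $k\in\Z^2_-$, which you should track (it only affects the sign of some $W^l$ and is harmless for the conclusion).
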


In the remaining part of this section, we follow the arguments at the end of \cite[Section 2]{FL-3}. We can rewrite \eqref{vorticity-NSE} in the velocity variable $\tilde u_\cdot= u(\tilde \omega_\cdot)$ as follows:
  \begin{equation}\label{NSE}
  \d\tilde u + b(\tilde u) \,\d t = \nu A \tilde u\,\d t + \sqrt{2\nu}\, \d \tilde W.
  \end{equation}
Here, $b(u) = \mathcal P \div(u\otimes u)$ and $Au = \mathcal P \Delta u$, in which $\mathcal P$ is the orthogonal projection onto the space of divergence free vector fields on $\T^2$. It is clear that $\tilde u$ has trajectories in $C\big( [0,T], H^{-}(\T^2)\big)$, that is, in $C\big( [0,T], H^{-\delta}(\T^2)\big)$ for any $\delta>0$. As mentioned at the beginning of this paper, the above equation has been studied intensively in the last two decades. We deduce from Lemma \ref{lem-absolute-continuity} and Proposition \ref{prop-BM} that the process $\tilde u$ is a stationary solution to \eqref{NSE} in the sense of \cite[Definition 4.1]{DaPD}. Let us remark that this definition is based only on the Sobolev regularity of $\tilde u\in C\big( [0,T], H^{-}(\T^2)\big)$; the definition of the nonlinear part $b(\tilde u)$ is based on the Galerkin approximation and coincides with our definition, as explained by \cite[Theorem A.12]{FL-3} in terms of the vorticity variable.

Similarly to the arguments in \cite[Section 3.5]{RZZ}, we can prove

\begin{proposition}\label{pathwise-uniqueness}
The uniqueness in law holds for stationary solutions to \eqref{NSE}.
\end{proposition}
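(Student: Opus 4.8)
The plan is to prove uniqueness in law by passing to the vorticity variable, where the invariant law is the white noise measure $\mu$ and the dynamics is generated by the operator $\L_\infty$ of \eqref{generator}. Since the curl map $u\mapsto \nabla^\perp\cdot u$ is a measurable bijection between the mean-zero velocity and vorticity fields, with inverse given by the Biot--Savart law, uniqueness in law for \eqref{NSE} is equivalent to uniqueness in law for the $\L_\infty$-martingale problem among stationary processes whose one-dimensional marginal is $\mu$. By Proposition \ref{approx-prop-1} and Lemma \ref{lem-absolute-continuity}, every solution produced above is exactly such a stationary martingale solution, so it suffices to show that this martingale problem has at most one solution in the stationary regime, following the scheme of \cite[Section 3.5]{RZZ}.

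The route I would take is to associate to each stationary solution a sub-Markovian $C_0$-semigroup on $L^2(\mu)$ whose generator extends $(\L_\infty, \mathcal{FC}_b^2)$, and then to prove that this extension is unique, i.e. that $(\L_\infty, \mathcal{FC}_b^2)$ is essentially m-dissipative (Markov unique) on $L^2(\mu)$. The natural splitting is $\L_\infty = \L_0 + \mathcal{N}$, where
  $$\L_0 F = 4\pi^2 \sum_{l\in\Lambda} |l|^2 \big[f_{l,l}(\omega) - f_l(\omega)\<\omega, e_l\>\big]$$
is the Ornstein--Uhlenbeck (number) operator, self-adjoint and m-dissipative on $L^2(\mu)$ with $\mu$ as symmetrizing measure, and
  $$\mathcal{N} F = \sum_{l\in\Lambda} f_l(\omega)\,\big\<\omega\otimes\omega, H_{e_l}\big\>$$
is the nonlinear transport part. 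Two structural facts make the perturbation tractable: first, $\mathcal{N}F\in L^2(\mu)$ for every $F\in\mathcal{FC}_b^2$, because $\big\<\omega\otimes\omega, H_{e_l}\big\>$ lives in the second Wiener chaos of $\mu$ and is square integrable by the computation underlying Lemma \ref{2-lem-integrability}; second, $\mathcal{N}$ is antisymmetric on $L^2(\mu)$, i.e. $\mu$ is infinitesimally invariant for the Euler drift (reflecting enstrophy conservation, cf. \cite{AF}), so that $\<\L_\infty F, F\>_{L^2(\mu)} = \<\L_0 F, F\>_{L^2(\mu)} \le 0$ and $\L_\infty$ is dissipative.

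The hard part will be the range condition: showing that $(\lambda - \L_\infty)(\mathcal{FC}_b^2)$ is dense in $L^2(\mu)$ for some, hence all, $\lambda>0$. Because $\omega$ has only white-noise regularity, the antisymmetric drift is singular and cannot be treated as a bounded perturbation of $\L_0$; one must instead balance the smoothing of $\L_0$ on higher chaos against the roughness of $\mathcal{N}$. The strategy is to show that $\mathcal{N}$ is relatively bounded with respect to a suitable fractional power of $-\L_0$, with the required control coming from second-chaos product estimates and Besov bounds on $\big\<\omega\otimes\omega, H_{e_l}\big\>$, and then to pass the range density to the limit through the finite-dimensional Galerkin truncations $\L_\infty^{(N)}\to\L_\infty$ together with a perturbation theorem for m-dissipative operators of the form ``self-adjoint plus antisymmetric $L^2(\mu)$-drift''. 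This is precisely the mechanism of \cite[Section 3.5]{RZZ}, where the necessary chaos and regularity estimates are carried out, and it is the step I expect to be the main obstacle.

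Finally, once essential m-dissipativity is established, uniqueness in law follows by the standard generator-uniqueness argument. The unique m-dissipative extension generates a unique $C_0$-contraction semigroup $(P_t)$ on $L^2(\mu)$; any stationary solution, through its conditional expectations, yields a semigroup solving the same resolvent equation $(\lambda-\L_\infty)u = g$ on the dense set $(\lambda-\L_\infty)(\mathcal{FC}_b^2)$, so that $\E\big[F(\omega_t)\,|\,\mathcal F_s\big] = (P_{t-s}F)(\omega_s)$ is forced for all $F\in\mathcal{FC}_b^2$. Iterating over finitely many time instants identifies all finite-dimensional distributions, hence the law on $C\big([0,T], H^{-1-}(\T^2)\big)$ is unique; transporting back through the Biot--Savart map gives uniqueness in law for \eqref{NSE}.
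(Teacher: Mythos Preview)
Your route is genuinely different from the paper's, and it contains a real gap. The paper does \emph{not} argue through Markov uniqueness of $(\L_\infty,\mathcal{FC}_b^2)$ at all. Instead it works directly with the velocity equation \eqref{NSE}: by Kurtz's Yamada--Watanabe--Engelbert theorem \cite[Theorem~3.14]{Kur}, uniqueness in law follows from pathwise uniqueness, and pathwise uniqueness is obtained from the Da~Prato--Debussche mild-solution theory \cite{DaPD}. Concretely, two stationary solutions $u_1,u_2$ with the same data and driving noise are written in mild form, the stationary Ornstein--Uhlenbeck process $Z$ is subtracted, and one checks that $u_i-Z$ lands in the Besov-type space $\mathcal E=L^\beta(0,T;B^\alpha_{p,\rho})\cap C([0,T];B^\sigma_{p,\rho})$ in which \cite[Theorem~5.2]{DaPD} gives uniqueness. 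The reference to \cite[Section~3.5]{RZZ} in the paper concerns this ``pathwise uniqueness $\Rightarrow$ uniqueness in law'' scheme, not an operator-theoretic Markov uniqueness argument.

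The gap in your proposal is the range condition. You need essential m-dissipativity of $(\L_\infty,\mathcal{FC}_b^2)$ on $L^2(\mu)$, and you plan to import it from \cite{RZZ}; but \cite{RZZ} treats the $P(\Phi)_2$ drift, whose structure (gradient of a local potential, polynomial in $\Phi$) is quite different from the Euler transport term $\mathcal N F=\sum_l f_l(\omega)\langle\omega\otimes\omega,H_{e_l}\rangle$. The relative-boundedness estimate of $\mathcal N$ against a fractional power of $-\L_0$ that you invoke is not supplied there, and to my knowledge essential m-dissipativity of the full Navier--Stokes Kolmogorov operator on $L^2(\mu)$ is not available in the cited literature; establishing it would be a substantial result on its own, not a lemma on the way to this proposition. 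A secondary issue is your last step: deducing that an arbitrary stationary martingale solution induces a $C_0$-semigroup presupposes the Markov property, which is exactly what is in question for weak limits. The paper sidesteps both difficulties by using the already-established strong well-posedness of \cite{DaPD}.
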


\begin{proof}
By \cite[Theorem 3.14]{Kur}, it is sufficient to show that the pathwise uniqueness holds for stationary solutions of  \eqref{NSE}. Let $u_i \ (i=1,2)$ be two stationary solutions to the equation \eqref{NSE} in the sense of \cite[Definition 4.1]{DaPD}, which are defined on the same probability space $(\Theta, \mathcal F, \P)$, with the same initial data $u_1(0) = u_2(0) = u(0)$ ($\P$-a.s.) and the same cylindrical Brownian motion $W(t)$, $0\leq t \leq T$. Then, for $i=1,2$, $\P$-a.s.,
  $$u_i(t) = u(0) - \int_0^t b(u_i(s))\,\d s + \int_0^t A u_i(s)\,\d s + \sqrt{2}\,W(t),\quad 0\leq t\leq T.$$
These equations can be rewritten as
  $$u_i(t) = {\rm e}^{tA} u(0) - \int_0^t {\rm e}^{(t-s)A} b(u_i(s))\,\d s + \sqrt{2} \int_0^t {\rm e}^{(t-s)A}\, \d W(s). $$
We extend $W(\cdot)$ to be a two-sided cylindrical Brownian motion on $\R$ (possibly at the price of enlarging $(\Theta, \mathcal F, \P)$) and define
  $$Z(t)= \sqrt{2} \int_{-\infty}^t {\rm e}^{(t-s)A}\, \d W(s). $$
It is well known that $Z$ is a stationary process with paths in $C\big([0,T], B^\sigma_{p,\rho} \big)$ for any $\sigma<0,\, \rho\geq p\geq 2$ (cf. the last line on p.196 of \cite{DaPD}). Here, for any $s\in \R$, $B^s_{p,\rho}$ is the Besov space on $\T^2$. Note that
  $$ \sqrt{2} \int_0^t {\rm e}^{(t-s)A}\, \d W(s)= Z(t)- {\rm e}^{tA} Z(0),$$
we arrive at
  \begin{equation}\label{pathwise-uniqueness.1}
  u_i(t) -Z(t)= {\rm e}^{tA} (u(0) -Z(0)) - \int_0^t {\rm e}^{(t-s)A} b(u_i(s))\,\d s, \quad i=1,2.
  \end{equation}

As in \cite[Theorem 5.2, p.196]{DaPD}, let $\alpha, \beta, p, \rho, \sigma$ be such that
  $$\frac2p >\alpha> -\sigma>0,\, \rho=p \geq 2,\, \beta\geq 1, \, -\frac12 + \frac1p < \frac\alpha 2 -\frac1\beta <\frac\sigma2.$$
Using these parameters, we define the following space
  $$\mathcal E = L^\beta \big(0,T; B^\alpha_{p,\rho}\big) \cap C\big([0,T], B^\sigma_{p,\rho} \big).$$
Since for any $t\in [0,T]$, $u_i(t)$ is distributed as $\mathcal N(0, (-A)^{-1})= \otimes_{k\in \Z_0^2} N\big(0, 1/(4\pi^2 |k|^2) \big)$, one has $u_i(t) \in B^\sigma_{p,\rho},\ \P$-a.s. (see \cite[Proposition 3.1]{AF}). We also have $Z(0)\in B^\sigma_{p,\rho}$ ($\P$-a.s.), thus by \cite[Lemma 6.1]{DaPD}, we obtain that, $\P$-a.s., $[0,T]\ni t\mapsto {\rm e}^{tA} (u(0) -Z(0)) \in \mathcal E$. Next, for any $\gamma\geq 1$ and $\eps>0$, since
  $$\E\bigg(\int_0^T \|b(u_i(t))\|_{H^{-1-\eps}}^\gamma \,\d t\bigg) = \int_0^T \E\big( \|b(u_i(t))\|_{H^{-1-\eps}}^\gamma \big) \,\d t,$$
using estimates on the operator $b(\cdot)$ and the regularity provided by the Gaussian marginal of $u_i(\cdot)$, we can prove $b(u_i(\cdot)) \in L^\gamma\big(0,T; H^{-1-\eps}\big)$ ($\P$-a.s.), see the arguments on the top of p.197 in \cite{DaPD} for details. Therefore, \cite[Lemma 6.2]{DaPD} gives us that $\int_0^t {\rm e}^{(t-s)A} b(u_i(s))\,\d s \in \mathcal E$. Combining these discussions with the equations \eqref{pathwise-uniqueness.1}, we deduce that $u_i -Z \in \mathcal E$ ($\P$-a.s.) for $i=1,2$. By \cite[Theorem 5.2, p.196]{DaPD} (see in particular the arguments on p.200 after the proof), we obtain, $\P$-a.s., $u_1(t) = u_2(t)$ for all $t\in [0,T]$. Thus the pathwise uniqueness holds for stationary solutions to \eqref{NSE}.
\end{proof}

Recall that $\{Q^N\}_{N\geq 1}$ are the distributions of $\big(\omega^N_t \big)_{0\leq t\leq T}$. Now we can prove the main result of this paper.

\begin{theorem}\label{thm-convergence}
The whole sequence $\{Q^N\}_{N\geq 1}$ converges weakly to the distribution of solution to \eqref{vorticity-NSE}.
\end{theorem}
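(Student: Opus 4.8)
The plan is to invoke the standard subsequence principle: for a tight family of probability measures on a Polish space, the whole sequence converges weakly as soon as \emph{every} weak limit point is shown to coincide with one and the same fixed law. Concretely, I would combine the tightness of $\{Q^N\}_{N\geq 1}$ in $\mathcal X = C\big([0,T], H^{-1-}(\T^2)\big)$ established after Lemma \ref{lem-estimate}, the identification of limit points carried out in Propositions \ref{approx-prop-1}--\ref{prop-BM}, and the uniqueness in law supplied by Proposition \ref{pathwise-uniqueness}.

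First I would fix an \emph{arbitrary} subsequence of $\{Q^N\}_{N\geq 1}$. By tightness together with Prokhorov's theorem, it admits a further weakly convergent sub-subsequence $\{Q^{N_i}\}$ with some limit $Q^\ast$ on $\mathcal X$. Applying verbatim the construction preceding Proposition \ref{approx-prop-1} — Skorokhod's representation theorem, the enlargement of the probability space by the random permutations $\tilde s_N$, the identification of the approximating processes as random point vortices (Lemma \ref{lem-1}), and the martingale characterization of the generator $\L_\infty$ (Proposition \ref{approx-prop-1}) — I obtain a process $\omega_\cdot$ with law $Q^\ast$ that, by Proposition \ref{prop-BM}, solves \eqref{vorticity-NSE} with a suitable cylindrical Brownian motion. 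By Lemma \ref{lem-absolute-continuity} its time-marginal is the white noise measure $\mu$ at every $t\in[0,T]$, so passing to the velocity variable $\tilde u = u(\omega)$ yields a \emph{stationary} solution of \eqref{NSE} in the sense of \cite[Definition 4.1]{DaPD}.

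Next I would invoke Proposition \ref{pathwise-uniqueness}, which asserts uniqueness in law for stationary solutions of \eqref{NSE}. Since the Biot--Savart correspondence $\omega = \nabla^\perp\cdot u$, $u = K\ast\omega$ is a measurable bijection between the relevant vorticity and velocity path spaces, uniqueness in law in the velocity variable transfers to the vorticity variable. Hence the law $Q^\ast$ is uniquely determined, independently of which subsequence was selected; denote it by $Q$. As every subsequence of $\{Q^N\}_{N\geq 1}$ thus possesses a further sub-subsequence converging weakly to the \emph{same} limit $Q$, the whole sequence $\{Q^N\}_{N\geq 1}$ converges weakly to $Q$, which is precisely the law of the solution to \eqref{vorticity-NSE}.

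The main obstacle feeding this argument is the uniqueness step. Although Proposition \ref{pathwise-uniqueness} is phrased for \eqref{NSE}, one must ensure that stationary-solution uniqueness pins down the entire path law $Q^\ast$ and not merely its fixed-time marginals: this rests on the initial datum being tied to the white noise measure (Lemma \ref{lem-absolute-continuity}) together with the pathwise-uniqueness-plus-Yamada--Watanabe mechanism underlying Proposition \ref{pathwise-uniqueness}. One should also check that the vorticity--velocity change of variables is compatible with the topologies of $\mathcal X$ and $C\big([0,T], H^{-}(\T^2)\big)$ used for the two formulations, so that both weak convergence and uniqueness translate faithfully between them. Once these compatibility points are verified, the rest is the routine subsequence argument recalled above.
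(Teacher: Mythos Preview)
Your proposal is correct and follows essentially the same route as the paper: tightness plus uniqueness in law (Proposition~\ref{pathwise-uniqueness}) forces the whole sequence to converge, via the standard subsequence principle. The paper compresses this into a single sentence, whereas you spell out the Prokhorov/Skorokhod extraction and the vorticity--velocity transfer explicitly; these are exactly the implicit steps behind the paper's one-line proof.
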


\begin{proof}
Proposition \ref{pathwise-uniqueness} implies that the stationary solutions to \eqref{vorticity-NSE} are unique in law, thus we deduce the assertion from the tightness of the family $\{Q^N\}_{N\geq 1}$.
\end{proof}

\section{Appendix}

Recall the expressions of $\omega^N_0$ in \eqref{mapping} and of $C_{k,l}$ in \eqref{coefficients}. In this part we prove the following technical result.

\begin{proposition}\label{prop-square-bound}
For any $l,m\in \Z^2_0$ fixed, the sequence of random variables
  $$R_{l,m}(\omega^N_0) = \sum_{k\in \Lambda_N} C_{k,l} C_{k,m} \big(\<\omega^N_0, e_ke_l\> \<\omega^N_0, e_ke_m\> -\delta_{l,m} \big) $$
is bounded in $L^2(\Theta, \mathcal F, \P)$.
\end{proposition}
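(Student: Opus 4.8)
The plan is to estimate $\E\big[R_{l,m}(\omega^N_0)^2\big]$ directly by expanding the square into a double sum over $k,k'\in\Lambda_N$ and controlling each contribution uniformly in $N$. Writing $P_k=\big\<\omega^N_0,e_ke_l\big\>\big\<\omega^N_0,e_ke_m\big\>$, the starting observation is that $P_k=\big\<\omega^N_0\otimes\omega^N_0,(e_ke_l)\otimes(e_ke_m)\big\>$, so that the key quantity $\E[P_kP_{k'}]$ is of the bilinear form $\E\big[\big\<\omega^N_0\otimes\omega^N_0,F\big\>\big\<\omega^N_0\otimes\omega^N_0,G\big\>\big]$ with $F,G$ bounded. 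Since $\omega^N_0\otimes\omega^N_0$ pairs only with the symmetrization of its argument, we may symmetrize $F,G$ and then polarize the exact second-moment identity of Lemma \ref{2-lem-integrability} (apply it to the symmetric functions $F+G$ and $F-G$ and subtract), which gives after a short computation
$$\E[P_kP_{k'}]=\frac{N-1}N\big(\mu_k\mu_{k'}+\alpha_{k,k'}\beta_{k,k'}+\gamma_{k,k'}^2\big)+\frac3N\nu_{k,k'},$$
where $\mu_k=\int_{\T^2}e_k^2e_le_m\,\d x$, $\nu_{k,k'}=\int_{\T^2}e_k^2e_{k'}^2e_l^2e_m^2\,\d x$, $\alpha_{k,k'}=\int_{\T^2}e_ke_{k'}e_l^2\,\d x$, $\beta_{k,k'}=\int_{\T^2}e_ke_{k'}e_m^2\,\d x$ and $\gamma_{k,k'}=\int_{\T^2}e_ke_{k'}e_le_m\,\d x$. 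All five integrals are bounded by $4$ since $\|e_k\|_\infty=\sqrt2$.

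Next I would split $\E\big[R_{l,m}(\omega^N_0)^2\big]=\big(\E R_{l,m}(\omega^N_0)\big)^2+\mathrm{Var}\big(R_{l,m}(\omega^N_0)\big)$. For the mean, $\E P_k=\mu_k$, so $\E R_{l,m}(\omega^N_0)=\sum_{k\in\Lambda_N}C_{k,l}C_{k,m}(\mu_k-\delta_{l,m})$; since $e_k^2-1$ is a bounded multiple of the single basis mode at frequency $2k$, the difference $\mu_k-\delta_{l,m}=\int_{\T^2}(e_k^2-1)e_le_m\,\d x$ is nonzero only for the finitely many $k$ with $2k$ a frequency of $e_le_m$, whence $|\E R_{l,m}(\omega^N_0)|$ is bounded uniformly in $N$. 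For the variance, $\mathrm{Cov}(P_k,P_{k'})=\tfrac{N-1}N(\alpha_{k,k'}\beta_{k,k'}+\gamma_{k,k'}^2)-\tfrac1N\mu_k\mu_{k'}+\tfrac3N\nu_{k,k'}$. The two $1/N$-terms are harmless: with $|C_{k,l}|\le|l|/|k|$ and $\sum_{k\in\Lambda_N}|k|^{-2}=\eps_N^{-2}\sim\log N$, the $\nu$-term is at most $CN^{-1}\eps_N^{-4}\to0$, while $\sum_{k\in\Lambda_N}C_{k,l}C_{k,m}\mu_k=O(\eps_N^{-2})$ forces the $\mu_k\mu_{k'}$-term, which equals $-\tfrac1N\big(\sum_k C_{k,l}C_{k,m}\mu_k\big)^2$, to be $O(N^{-1}\eps_N^{-4})\to0$ as well. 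Here the leading part $\delta_{l,m}\sum_k C_{k,l}C_{k,m}=\tfrac12\delta_{l,m}\eps_N^{-2}|l|^2$ is precisely the logarithmic divergence that the centering by $\delta_{l,m}$ removes at the level of the mean.

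The only term carrying no $1/N$ prefactor is $\tfrac{N-1}N\sum_{k,k'}C_{k,l}C_{k,m}C_{k',l}C_{k',m}(\alpha_{k,k'}\beta_{k,k'}+\gamma_{k,k'}^2)$, and bounding it uniformly in $N$ is the main obstacle. The key is a frequency-support argument via the product-to-sum identities on $\T^2$: $e_ke_{k'}$ is a bounded combination of the two modes at frequencies $k+k'$ and $k-k'$, whereas $e_l^2,e_m^2,e_le_m$ carry only the frequencies $\{0,\pm2l\}$, $\{0,\pm2m\}$, $\{\pm(l+m),\pm(l-m)\}$ respectively. Hence $\alpha_{k,k'},\beta_{k,k'},\gamma_{k,k'}$ vanish unless $k\pm k'$ belongs to a finite set determined by $l,m$; thus for each $k$ only $O(1)$ values of $k'$ contribute, and each satisfies $|k'|\ge|k|-2(|l|+|m|)$, so that $|k'|\asymp|k|$ for $|k|$ large. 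Since the integrals are uniformly bounded, the double sum collapses to a single sum dominated by $C(l,m)\sum_{k\in\Z_0^2}|k|^{-4}<\infty$ (with a finite correction from the $O(|l|^2+|m|^2)$ small values of $k$), a bound independent of $N$. Combining the three estimates yields $\sup_{N\ge1}\E\big[R_{l,m}(\omega^N_0)^2\big]<\infty$, as required.
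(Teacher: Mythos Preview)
Your argument is correct, and it proves the same statement as the paper, but the organization differs in a way worth noting. The paper expands $\omega^N_0$ directly into point masses, applies the Isserlis--Wick theorem to $\E(\xi_r\xi_s\xi_{r'}\xi_{s'})$, and obtains a decomposition $\E R_N^2=S_1+S_2+S_3$; each $S_j$ is then split according to whether the particle indices coincide, and the two cases $l\neq m$ and $l=m$ are handled separately because the cancellation of the divergent part $\tfrac14\eps_N^{-4}|l|^4$ against the centering $-\delta_{l,m}$ only arises in the second case. Your route is more streamlined: you polarize the exact second-moment identity of Lemma~\ref{2-lem-integrability} to get a closed formula for $\E[P_kP_{k'}]$ in one stroke, and then the mean--variance split absorbs the $l=m$ versus $l\neq m$ dichotomy automatically (the potentially divergent piece $\delta_{l,m}\sum_kC_{k,l}^2$ is cancelled already at the level of $\E R_{l,m}$, while only the genuinely off-diagonal quantities $\alpha,\beta,\gamma$ survive in the variance). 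The substantive estimates are the same in both approaches: the $1/N$ contributions are controlled by $\sum_{k\in\Lambda_N}|k|^{-2}=\eps_N^{-2}\sim\log N$, and the leading term is handled by the identical frequency-support argument (nonvanishing of $\int e_ke_{k'}e_l^2$, $\int e_ke_{k'}e_m^2$, $\int e_ke_{k'}e_le_m$ forces $k\pm k'$ into a finite set determined by $l,m$), which collapses the double sum to $O\big(\sum_k|k|^{-4}\big)$. Your packaging avoids repeating the diagonal estimate three times and makes the role of the centering by $\delta_{l,m}$ transparent; the paper's version is more explicit about where each constant comes from.
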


The proof of the above assertion follows the idea of \cite[Appendix 6]{FL-2}, with some combinatorial flavor here. Since $l,m$ are fixed, we write $R_N$ instead of $R_{l,m}(\omega^N_0)$ for simplicity. We deal with the two cases $l\neq m$ and $l=m$ in the two subsections separately.

\subsection{Case 1: $l\neq m$}

The definition of  $\omega^N_0$ yields
  $$R_N = \frac1N \sum_{k\in \Lambda_N} C_{k,l} C_{k,m} \sum_{r,s=1}^N \xi_r \xi_s (e_k e_l)(X^r_0) (e_k e_m)(X^s_0),$$
therefore,
  $$\aligned
  R_N^2 = \frac1{N^2} \sum_{k, k'\in \Lambda_N} \sum_{r,s,r's'=1}^N &\, C_{k,l} C_{k,m} C_{k',l} C_{k',m}  \xi_r \xi_s \xi_{r'} \xi_{s'} \\
  &\, \times (e_k e_l)(X^r_0) (e_k e_m)(X^s_0) (e_{k'} e_l)(X^{r'}_0) (e_{k'} e_m)(X^{s'}_0).
  \endaligned$$
Recall that the two families $\{\xi_r \}_{r\geq 1}$ and $\{X^r_0 \}_{r\geq 1}$ are independent, and $\{\xi_r \}_{r\geq 1}$ is an i.i.d. sequence of $N(0,1)$ r.v.'s, while $\{X^r_0 \}_{r\geq 1}$ consists of i.i.d. $\T^2$-valued uniform r.v.'s. We have
  $$\aligned
  \E R_N^2 = \frac1{N^2} \sum_{k, k'\in \Lambda_N} \sum_{r,s,r',s'=1}^N &\, C_{k,l} C_{k,m} C_{k',l} C_{k',m}  \E(\xi_r \xi_s \xi_{r'} \xi_{s'}) \\
  &\, \times \E\big[(e_k e_l)(X^r_0) (e_k e_m)(X^s_0) (e_{k'} e_l)(X^{r'}_0) (e_{k'} e_m)(X^{s'}_0) \big]
  \endaligned$$
and by the Isserlis--Wick theorem,
  $$\aligned
  \E(\xi_r \xi_s \xi_{r'} \xi_{s'}) &= \E(\xi_r \xi_s ) \E(\xi_{r'} \xi_{s'}) + \E(\xi_r \xi_{r'}) \E(\xi_s \xi_{s'}) + \E(\xi_r \xi_{s'})\E(\xi_s \xi_{r'}) \\
  &= \delta_{r,s} \delta_{r',s'} + \delta_{r,r'} \delta_{s,s'} + \delta_{r,s'} \delta_{s,r'}.
  \endaligned $$
As a result, we can write
  \begin{equation}\label{app-1}
  \E R_N^2 = S_1 +S_2 + S_3.
  \end{equation}

\subsubsection{The quantity $S_1$}

We have
  $$S_1 = \frac1{N^2} \sum_{k, k'\in \Lambda_N} \sum_{r,r'=1}^N C_{k,l} C_{k,m} C_{k',l} C_{k',m} \E\big[(e_k^2 e_l e_m)(X^r_0) (e_{k'}^2 e_l e_m)(X^{r'}_0) \big].$$
Note that $X^r_0$ and $X^{r'}_0$ are independent if $r\neq r'$, hence
  \begin{equation}\label{S1-decomposition}
  \aligned
  S_1 =&\, \frac1{N^2} \sum_{k, k'\in \Lambda_N} \sum_{1\leq r\neq r'\leq N} C_{k,l} C_{k,m} C_{k',l} C_{k',m} \E\big[(e_k^2 e_l e_m)(X^r_0) \big] \E\big[(e_{k'}^2 e_l e_m)(X^{r'}_0) \big] \\
  &\, + \frac1{N^2} \sum_{k, k'\in \Lambda_N} \sum_{r=1}^N C_{k,l} C_{k,m} C_{k',l} C_{k',m} \E\big[(e_k^2 e_{k'}^2 e_l^2 e_m^2 )(X^r_0) \big] .
  \endaligned
  \end{equation}
We denote the two terms by $S_{1,1}$ and $S_{1,2}$, respectively.

First, since $X^r_0\ (r\in \N)$ is a uniformly distributed r.v. on the torus $\T^2$, we obtain
  $$\aligned
  S_{1,1} &= \frac1{N^2} \sum_{k, k'\in \Lambda_N} \sum_{1\leq r\neq r'\leq N} C_{k,l} C_{k,m} C_{k',l} C_{k',m} \int e_k^2 e_l e_m\,\d x \int e_{k'}^2 e_l e_m\,\d x \\
  &= \frac{N^2 -N}{N^2}\sum_{k, k'\in \Lambda_N} C_{k,l} C_{k,m} C_{k',l} C_{k',m} \int e_k^2 e_l e_m\,\d x \int e_{k'}^2 e_l e_m\,\d x \\
  &= \bigg(1- \frac1N \bigg) \bigg(\sum_{k\in \Lambda_N} C_{k,l} C_{k,m} \int e_k^2 e_l e_m\,\d x\bigg)^2.
  \endaligned$$
Note that $C_{-k,l} = -C_{k,l}$ and $e_k^2 + e_{-k}^2 \equiv 2$ for any $k\in \Z_0^2$, we have
  \begin{equation}\label{S1-0}
  \sum_{k\in \Lambda_N} C_{k,l} C_{k,m} e_k^2 = \sum_{k\in \Lambda_N \cap \Z^2_+} \big(C_{k,l} C_{k,m} e_k^2 + C_{-k,l} C_{-k,m} e_{-k}^2\big) = 2 \sum_{k\in \Lambda_N \cap \Z^2_+} C_{k,l} C_{k,m}
  \end{equation}
is a constant. This implies
  \begin{equation}\label{S1-1}
  S_{1,1} =0
  \end{equation}
since $\int e_l e_m\,\d x =0$ for $l\neq m$.

Regarding the term $S_{1,2}$, we have
  $$\aligned S_{1,2} &= \frac1{N^2} \sum_{k, k'\in \Lambda_N} \sum_{r=1}^N C_{k,l} C_{k,m} C_{k',l} C_{k',m} \int e_k^2 e_{k'}^2 e_l^2 e_m^2\,\d x \\
  &= \frac1N \sum_{k, k'\in \Lambda_N} C_{k,l} C_{k,m} C_{k',l} C_{k',m} \int e_k^2 e_{k'}^2 e_l^2 e_m^2\,\d x .
  \endaligned$$
As $|e_k(x)|\leq \sqrt{2}$ for all $x\in \T^2$ and $k\in \Z_0^2$, we deduce that
  $$|S_{1,2}| \leq \frac{16}N \sum_{k, k'\in \Lambda_N} \frac{|l|^2 |m|^2} {|k|^2 |k'|^2} = \frac{16}N |l|^2 |m|^2 \bigg(\sum_{k\in \Lambda_N} \frac{1} {|k|^2}\bigg)^2 \leq C(l,m) \frac{(\log N)^2}N .$$
Combining the above estimate with \eqref{S1-decomposition} and \eqref{S1-1}, we arrive at
  \begin{equation}\label{S1-2}
  |S_1|\leq C_1 \frac{(\log N)^2}N \quad \mbox{for all } N\geq 2.
  \end{equation}

\subsubsection{The quantity $S_2$}

We have
  $$S_2= \frac1{N^2} \sum_{k, k'\in \Lambda_N} \sum_{r,s=1}^N C_{k,l} C_{k,m} C_{k',l} C_{k',m} \E\big[(e_ke_{k'} e_l^2)(X^r_0) (e_ke_{k'} e_m^2)(X^s_0) \big]. $$
Similar to \eqref{S1-decomposition}, the above quantity can be decomposed as
  $$\aligned
  S_2 =&\, \frac1{N^2} \sum_{k, k'\in \Lambda_N} \sum_{1\leq r\neq s\leq N} C_{k,l} C_{k,m} C_{k',l} C_{k',m} \E\big[(e_ke_{k'} e_l^2)(X^r_0) \big] \E\big[(e_ke_{k'} e_m^2)(X^s_0) \big]\\
  &\, + \frac1{N^2} \sum_{k, k'\in \Lambda_N} \sum_{r=1}^N C_{k,l} C_{k,m} C_{k',l} C_{k',m} \E\big[(e_k^2 e_{k'}^2 e_l^2 e_m^2)(X^r_0) \big] ,
  \endaligned$$
which are denoted as $S_{2,1}$ and $S_{2,2}$. Note that
  $$|S_{2,2}| = |S_{1,2}| \leq C_1 \frac{(\log N)^2}N \quad \mbox{for all } N\geq 2.$$
Next, using the fact that $X^r_0$ is uniformly distributed on $\T^2$ and the Cauchy inequality,
  $$\aligned |S_{2,1}| &= \bigg| \bigg(1- \frac1N \bigg) \sum_{k, k'\in \Lambda_N} C_{k,l} C_{k,m} C_{k',l} C_{k',m} \int e_ke_{k'} e_l^2 \,\d x \int e_ke_{k'} e_m^2 \,\d x \bigg| \\
  & \leq \Bigg[ \sum_{k, k'\in \Lambda_N} C^2_{k,l} C^2_{k',l} \bigg( \int e_ke_{k'} e_l^2 \,\d x \bigg)^2 \Bigg]^{1/2} \Bigg[ \sum_{k, k'\in \Lambda_N} C^2_{k,m} C^2_{k',m} \bigg( \int e_ke_{k'} e_m^2 \,\d x \bigg)^2 \Bigg]^{1/2}.
  \endaligned$$
It suffices to estimate one of the two terms. Intuitively, the quantity
  \begin{equation}\label{S2-0}
  I_N:= \sum_{k, k'\in \Lambda_N} C^2_{k,l} C^2_{k',l} \bigg( \int e_ke_{k'} e_l^2 \,\d x \bigg)^2
  \end{equation}
is bounded as $N \to \infty$ due to the fact that the integral $\int e_k e_{k'} e_l^2 \,\d x \neq 0$ imposes a constraint on $k$ and $k'$, e.g. $k=k'$ or $2l =k+k'$. Such constraint reduces the degree of freedom of $k$ and $k'$, and implies
  $$I_N \leq C_l \sum_{k\in \Lambda_N} \frac1{|k|^4} \leq C_l \sum_{k\in \Z_0^2} \frac1{|k|^4} \quad \mbox{for all } N\geq 1.$$
We refer the readers to \cite[Section 6.1.2]{FL-2} for details.

To summarize, we obtain
  \begin{equation}\label{S2}
  |S_2 |\leq C_2\bigg(1+ \frac{(\log N)^2}N \bigg).
  \end{equation}

\subsubsection{The quantity $S_3$}

Similar computations as above lead to
  $$\aligned
  S_3 =&\, \frac1{N^2} \sum_{k, k'\in \Lambda_N} \sum_{r,s=1}^N C_{k,l} C_{k,m} C_{k',l} C_{k',m} \E\big[(e_k e_{k'} e_le_m)(X^r_0) (e_k e_{k'} e_l e_m)(X^s_0) \big]\\
  = &\, \frac1{N^2} \sum_{k, k'\in \Lambda_N} \sum_{1\leq r\neq s\leq N} C_{k,l} C_{k,m} C_{k',l} C_{k',m} \E\big[(e_k e_{k'} e_le_m)(X^r_0) \big] \E\big[(e_k e_{k'} e_l e_m)(X^s_0) \big] \\
  &\, + \frac1{N^2} \sum_{k, k'\in \Lambda_N} \sum_{r=1}^N C_{k,l} C_{k,m} C_{k',l} C_{k',m} \E\big[(e_k^2 e_{k'}^2 e_l^2 e_m^2)(X^r_0) \big] .
  \endaligned$$
Again, the last quantity is dominated by a constant multiple of $(\log N)^2/N$. The first one on the right hand side is equal to
  $$\bigg(1- \frac1N \bigg) \sum_{k, k'\in \Lambda_N} C_{k,l} C_{k,m} C_{k',l} C_{k',m} \bigg( \int e_k e_{k'} e_le_m \,\d x\bigg)^2,$$
which, due to the same reason as for the term \eqref{S2-0}, is bounded in $N$. Therefore, we still have
  $$|S_3| \leq C_3 \bigg(1+ \frac{(\log N)^2}N \bigg).$$

Combining the above inequality with \eqref{app-1}, \eqref{S1-2} and \eqref{S2}, we conclude the assertion in the first case $l\neq m$.

\subsection{Case 2: $l=m$}

In this case,
  $$R_N = \sum_{k\in \Lambda_N} C_{k,l}^2 \big(\<\omega^N_0, e_ke_l\>^2 -1 \big).$$
Consequently,
  \begin{equation}\label{case2}
  \E R_N^2 = \sum_{k, k'\in \Lambda_N} C_{k,l}^2 C_{k',l}^2\, \E \big(\<\omega^N_0, e_ke_l\>^2 \<\omega^N_0, e_{k'}e_l\>^2 -\<\omega^N_0, e_ke_l\>^2 - \<\omega^N_0, e_{k'}e_l\>^2 +1 \big).
  \end{equation}
By the definition of $\omega^N_0$,
  $$ \aligned \E \big(\<\omega^N_0, e_ke_l\>^2 \big) &= \frac1N \sum_{r,s=1}^N \E(\xi_r \xi_s) \E\big[(e_ke_l)(X^r_0) (e_ke_l)(X^s_0) \big] \\
  &= \frac1N \sum_{r=1}^N \E\big[(e_k^2 e_l^2)(X^r_0) \big] = \int e_k^2 e_l^2 \,\d x.
  \endaligned$$
As a result,
  \begin{equation}\label{case2-1}
  \sum_{k, k'\in \Lambda_N} C_{k,l}^2 C_{k',l}^2\, \E \big(\<\omega^N_0, e_ke_l\>^2 \big) = \bigg(\sum_{k'\in \Lambda_N} C_{k',l}^2\bigg) \sum_{k\in \Lambda_N} C_{k,l}^2 \int e_k^2 e_l^2 \,\d x.
  \end{equation}
Similar to \eqref{S1-0},
  \begin{equation}\label{case2-2}
  \sum_{k\in \Lambda_N} C_{k,l}^2 e_k^2 = 2 \sum_{k\in \Lambda_N\cap \Z^2_+} C_{k,l}^2 = \sum_{k\in \Lambda_N} C_{k,l}^2 =\frac12 \eps_N^{-2} |l|^2,
  \end{equation}
where the last step is due to \eqref{useful-identity}. Substituting this result into \eqref{case2-1} yields
  \begin{equation*}
  \sum_{k, k'\in \Lambda_N} C_{k,l}^2 C_{k',l}^2\, \E \big(\<\omega^N_0, e_ke_l\>^2 \big) = \frac14 \eps_N^{-4} |l|^4.
  \end{equation*}
Analogously,
  \begin{equation*}
  \sum_{k, k'\in \Lambda_N} C_{k,l}^2 C_{k',l}^2\, \E \big(\<\omega^N_0, e_{k'}e_l\>^2 \big) = \frac14 \eps_N^{-4} |l|^4.
  \end{equation*}
Combining these facts with \eqref{case2}, we obtain
  \begin{equation}\label{case2-3}
  \E R_N^2 = \sum_{k, k'\in \Lambda_N} C_{k,l}^2 C_{k',l}^2\, \E \big(\<\omega^N_0, e_ke_l\>^2 \<\omega^N_0, e_{k'}e_l\>^2 \big) - \frac14 \eps_N^{-4} |l|^4.
  \end{equation}

Now we compute the expectation on the right hand side of \eqref{case2-3}. We have
  $$\<\omega^N_0, e_ke_l\>^2 \<\omega^N_0, e_{k'}e_l\>^2 = \frac1{N^2} \sum_{r,s, r',s'=1}^N \xi_r \xi_s \xi_{r'} \xi_{s'} (e_ke_l)(X^r_0) (e_ke_l)(X^s_0) (e_{k'} e_l)(X^{r'}_0) (e_{k'} e_l)(X^{s'}_0).$$
The Isserlis--Wick theorem implies
  \begin{equation}\label{case2-4} \aligned
  \E \big(\<\omega^N_0, e_ke_l\>^2 \<\omega^N_0, e_{k'}e_l\>^2 \big) =&\, \frac1{N^2} \sum_{r,r'=1}^N \E \big[(e_k^2 e_l^2)(X^r_0) (e_{k'}^2 e_l^2)(X^{r'}_0)\big]\\
  &\, + \frac2{N^2} \sum_{r,s=1}^N \E \big[(e_k e_{k'} e_l^2)(X^r_0) (e_k e_{k'} e_l^2)(X^s_0) \big]\\
  =: &\, J_1 +J_2.
  \endaligned
  \end{equation}
First,
  $$J_1= \frac1{N^2} \sum_{1\leq r\neq r'\leq N} \E\big[(e_k^2 e_l^2)(X^r_0) \big] \E\big[(e_{k'}^2 e_l^2)(X^{r'}_0)\big] + \frac1{N^2} \sum_{r=1}^N \E \big[(e_k^2 e_{k'}^2 e_l^4)(X^r_0)\big] $$
which are denoted by $J_{1,1}$ and $J_{1,2}$, respectively. Note that
  $$J_{1,1}= \bigg(1- \frac1N \bigg) \int e_k^2 e_l^2 \,\d x \int e_{k'}^2 e_l^2 \,\d x$$
and
  $$J_{1,2} = \frac1{N} \int e_k^2 e_{k'}^2 e_l^4\,\d x \leq \frac{16}N.$$
Moreover,
  $$\sum_{k, k'\in \Lambda_N} C_{k,l}^2 C_{k',l}^2 \cdot J_{1,1} = \bigg(1- \frac1N \bigg) \bigg(\sum_{k \in \Lambda_N} C_{k,l}^2 \int e_k^2 e_l^2 \,\d x\bigg)^2 =\frac14 \bigg(1- \frac1N \bigg) \eps_N^{-4} |l|^4,$$
where the last step is due to \eqref{case2-2}. Therefore,
  \begin{equation}\label{case2-5}
  \sum_{k, k'\in \Lambda_N} C_{k,l}^2 C_{k',l}^2 \cdot J_1 = \frac14 \eps_N^{-4} |l|^4 + O\bigg(\frac{(\log N)^2}N \bigg).
  \end{equation}

It remains to estimate $J_2$ in \eqref{case2-4}. Similarly,
  $$ J_2 =\frac2{N^2} \sum_{1\leq r\neq s\leq N} \E \big[(e_k e_{k'} e_l^2)(X^r_0) \big] \E \big[(e_k e_{k'} e_l^2)(X^s_0) \big] + \frac2{N^2} \sum_{r=1}^N \E \big[(e_k^2 e_{k'}^2 e_l^4)(X^r_0) \big]. $$
We write $J_{2,1}$ and $J_{2,2}$ for the two terms. We still have
  $$J_{2,2}= \frac2{N}\int e_k^2 e_{k'}^2 e_l^4\,\d x \leq \frac{32}N.$$
Next,
  $$J_{2,1} = 2 \bigg(1- \frac1N \bigg) \bigg( \int e_k e_{k'} e_l^2 \,\d x \bigg)^2.$$
As a result,
  $$\sum_{k, k'\in \Lambda_N} C_{k,l}^2 C_{k',l}^2 \cdot J_2= 2 \bigg(1- \frac1N \bigg) \sum_{k, k'\in \Lambda_N} C_{k,l}^2 C_{k',l}^2 \bigg( \int e_k e_{k'} e_l^2 \,\d x \bigg)^2 + O\bigg(\frac{(\log N)^2}N \bigg).$$
Note that the sum in the first quantity is equal to $I_N$ defined in \eqref{S2-0}. Therefore,
  $$\bigg| \sum_{k, k'\in \Lambda_N} C_{k,l}^2 C_{k',l}^2 \cdot J_2 \bigg| \leq C_4 + O\bigg(\frac{(\log N)^2}N \bigg).$$
Combining this estimate with \eqref{case2-3}--\eqref{case2-5}, we finally get
  $$\E R_N^2 \leq C_4 + O\bigg(\frac{(\log N)^2}N \bigg).$$
The proof is complete.

\bigskip

\noindent \textbf{Acknowledgements.} The second author is grateful to Professors Rongchan Zhu and Xiangchan Zhu for valuable discussions, and for drawing his attention to the reference \cite{Kur}. He also thanks the financial supports of the National Natural Science Foundation of China (Nos. 11431014, 11571347, 11688101), the Youth Innovation Promotion Association, CAS (2017003) and the Special Talent Program of the Academy of Mathematics and Systems Science, CAS.


\begin{thebibliography}{99}

\bibitem{AC} S. Albeverio, A. B. Cruzeiro, Global flows with invariant (Gibbs) measures for Euler and Navier--Stokes two-dimensional fluids. \textit{Comm. Math. Phys.} \textbf{129} (1990), 431--444.

\bibitem{AF} S. Albeverio, B. Ferrario, Uniqueness of solutions of the stochastic Navier--Stokes equation with invariant measure given by the enstrophy. \emph{Ann. Probab.} \textbf{32} (2004), 1632--1649.

\bibitem{AF2} S. Albeverio and B. Ferrario, Some Methods of Infinite Dimensional Analysis in Hydrodynamics: An Introduction, In SPDE in Hydrodynamic: Recent Progress and Prospects, G. Da Prato and M. R\"{o}ckner Eds., CIME Lectures, Springer--Verlag, Berlin 2008.

\bibitem{Billingsley} P. Billingsley, Convergence of Probability Measures. Second edition. Wiley Series in Probability and Statistics: Probability and Statistics. A Wiley-Interscience Publication. \emph{John Wiley \& Sons, Inc., New York,} 1999.


\bibitem{DaPD} G. Da Prato, A. Debussche, Two-Dimensional Navier--Stokes Equations Driven by a Space--Time White Noise. \emph{J. Funct. Anal.} \textbf{196} (2002), 180--210.

\bibitem{DaPZ} G. Da Prato, J. Zabczyk, Stochastic Equations in Infinite Dimensions. Encyclopedia of Mathematics and its Applications, 44. \emph{Cambridge University Press, Cambridge,} 1992.


\bibitem{Debussche} A. Debussche, The 2D Navier--Stokes equations perturbed by a delta correlated noise. \emph{Probabilistic methods in fluids,}  115--129, \emph{World Sci. Publ., River Edge, NJ,} 2003.

\bibitem{F1} F. Flandoli, Weak vorticity formulation of 2D Euler equations with white noise initial condition. \emph{Comm. Partial Differential Equations} \textbf{43} (2018), 1102--1149.

\bibitem{FL-1} F. Flandoli, D. Luo, $\rho$-white noise solution to 2D stochastic Euler equations. \emph{Probab. Theory Relat. Fields} (2019), https://doi.org/10.1007/s00440-019-00902-8.

\bibitem{FL-2} F. Flandoli, D. Luo, Kolmogorov equations associated to the stochastic 2D Euler equations. \emph{SIAM J. Math. Anal.}, accepted, see arXiv:1803.05654.

\bibitem{FL-3} F. Flandoli, D. Luo, Convergence of transport noise to Ornstein-Uhlenbeck for 2D Euler equations under the enstrophy measure, arXiv:1806.09332.

\bibitem{Kur} T.G. Kurtz, The Yamada--Watanabe--Engelbert theorem for general stochastic equations and inequalities, \emph{Electron. J. Probab.} \textbf{12} (2007), 951--965.

\bibitem{MP} C. Marchioro, M. Pulvirenti, Mathematical theory of incompressible nonviscous fluids, volume 96 of Applied Mathematical Sciences, Springer--Verlag, New York, 1994.



\bibitem{RZZ} M. R\"ockner, R. Zhu, X. Zhu, Restricted Markov uniqueness for the stochastic quantization of $P(\Phi)_2$ and its applications. \emph{J. Funct. Anal.} \textbf{272} (2017),  no. 10, 4263--4303.

\bibitem{Sauer} M. Sauer, $L^1$-Uniqueness of Kolmogorov Operators Associated with Two-Dimensional Stochastic Navier--Stokes Coriolis Equations with Space-Time White Noise. \emph{J. Theor. Probab.} \textbf{29} (2016), 569--589.


\bibitem{Stannat} W. Stannat, A new a priori estimate for the Kolmogorov operator of a 2D-stochastic Navier--Stokes equation. \emph{Infin. Dimens. Anal. Quantum Probab. Relat. Top.} \textbf{ 10}  (2007),  no. 4, 483--497.


\bibitem{ZhuZhu} Rongchan Zhu, Xiangchan Zhu, Strong-Feller property for Navier--Stokes equations driven by space-time white noise, arXiv:1709.09306.

\end{thebibliography}
\end{document}